\newtheorem{theorem}{Theorem}
\newtheorem{lemma}[theorem]{Lemma}
\newtheorem{proposition}[theorem]{Proposition}
\numberwithin{equation}{section}
\begin{document}

\title[Stationary Kirchhoff fractional critical problems]{Stationary Kirchhoff problems involving a fractional elliptic operator and a critical nonlinearity}

\author[G. Autuori]{Giuseppina Autuori}
\address{Dipartimento di Ingegneria Industriale e Scienze Matematiche, Universit\`{a} Politecnica delle Marche\\
Via Brecce Bianche, 12 -- 60131 Ancona, ITALY}
\email{\tt autuori@dipmat.unipvm.it}

\author[A.~Fiscella]{Alessio Fiscella}
\address{Dipartimento di Matematica ``Federigo Enriques'', Universit\`a di Milano\\
Via Cesare Saldini, 50 -- 20133 Milano, ITALY}
\email{\tt alessio.fiscella@unimi.it}
\address{Departamento de Matem\'atica, Universidade Estadual de Campinas, IMECC,
Rua S\'ergio Buarque de Holanda, 651, Campinas, SP CEP 13083--859 BRAZIL}
\email{\tt fiscella@ime.unicamp.br}

\author[P. Pucci]{Patrizia Pucci}
\address{Dipartimento di Matematica e Informatica, Universit\`{a} di Perugia\\
Via Vanvitelli, 1 -- 06123 Perugia, ITALY}
\email{\tt patrizia.pucci@unipg.it}

\keywords{Stationary Kirchhoff Dirichlet problems, existence of solutions, fractional Sobolev spaces, variational methods.\\
\phantom{aa} 2010 AMS Subject Classification: Primary:  35J60, 35R11, 35B09;  Secondary: 35J20, 49J35, 35S15.}

\begin{abstract}
This paper deals with the existence and the asymptotic behavior of non--negative solutions for a class of stationary Kirchhoff problems driven by a
fractional integro--differential ope\-ra\-tor $\mathcal L_K$ and involving a critical nonlinearity. In particular, we consider the problem
$$-M(\left\|u\right\|^{2})\mathcal L_Ku=\lambda f(x,u)+\left|u\right|^{{2^*_s} -2}u\quad \mbox{in }\Omega,
\qquad u=0\quad\mbox{in }\mathbb{R}^{n}\setminus\Omega,$$
where $\Omega\subset \mathbb R^n$ is a bounded domain, $2^*_s$ is the  critical exponent of the fractional Sobolev space $H^s(\mathbb{R}^n)$,
the function $f$ is a subcritical term and $\lambda$ is a positive parameter.
The main feature, as well as the main difficulty, of the analysis is the fact that the Kirchhoff function $M$ can be zero at zero,
that is  the problem is {\em degenerate}.
The adopted techniques are variational and the main theorems extend in several directions previous results recently appeared in the literature.
\end{abstract}

\maketitle

\section{Introduction}
In the last years, the attention towards nonlinear boundary value stationary Kirchhoff problems  has grown more and more,
thanks in particular to their intriguing analytical structure due to the presence of the nonlocal Kirchhoff function $M$
which makes the equation no longer a pointwise identity.
In the present paper we consider the problem
\begin{align}\label{P}
-&M\left(\|u\|^2\right)\mathcal L_Ku=\lambda f(x,u)\!+\!\left|u\right|^{2^*_s -2}\!u\quad\mbox{in } \Omega,\nonumber\\
&u=0\quad\mbox{in } \mathbb{R}^{n}\setminus\Omega,\\
&\|u\|^2=\iint_{\mathbb{R}^{2n}}\!\!\!\! |u(x)-u(y)|^2K(x-y)dxdy,\nonumber
\end{align}
where $\Omega\subset \mathbb R^n$ is a bounded domain, $n> 2s$,  with $s\in(0,1)$,  the number
\mbox{$2^*_s=2n/(n-2s)$} is the critical exponent of the fractional Sobolev space $H^s(\mathbb{R}^n)$,
the function $f$ is a sub\-cri\-tical term and $\lambda$ is a positive parameter.

The main nonlocal fractional operator $\mathcal L_K$ is defined for any $x\in\mathbb{R}^{n}$ by
\begin{equation}\label{nonloc}\mathcal L_K\varphi(x)=\frac{1}{2}\int_{\mathbb{R}^{n}}(\varphi(x+y)+\varphi(x-y)-2\varphi(x))K(y)dy,\end{equation}
along any $\varphi\in C^\infty_0(\Omega)$, where the kernel $K:\mathbb{R}^{n}\setminus \left\{0\right\}\to \mathbb R^+$ is a \textit{measurable} function
for which
\begin{enumerate}
\item[$(K_1)$]\label{K1} $m\,K\in L^{1}(\mathbb{R}^{n})$ \em{with} $m(x)=\min\left\{\left|x\right|^{2},1\right\}$;
\item[$(K_2)$]\label{K2} \it{there exists} $\theta >0$ \em{such that} $K(x)\geq\theta\left|x\right|^{-(n+2s)}$ \em{for any}  $x\in\mathbb{R}^{n}\setminus\{0\}$.
\end{enumerate}
A typical example of $K$ is given by $K(x)=\left|x\right|^{-(n+2s)}$. In this case the operator $\mathcal L_K=-(-\Delta)^{s}$
reduces to the fractional Laplacian, which (up to normalization factors) may be defined for any $x\in\mathbb{R}^{n}$ as
$$(-\Delta)^s \varphi(x)=\frac12\int_{\mathbb{R}^{n}}\frac{2\varphi(x)-\varphi(x+y)-\varphi(x-y)}{|y|^{n+2s}}dy$$
along any $\varphi\in C^\infty_0(\Omega)$; see \cite{valpal} and references therein for further details on the fractional
Laplace operator $(-\Delta)^s$ and the fractional Sobolev spaces $H^s(\mathbb{R}^n)$ and $H^s_0(\Omega)$.

Throughout the paper and without further mentioning, we assume that {\em the Kirchhoff function $M:\mathbb{R}^+_0 \to\mathbb{R}^+_0$ is  continuous} and,
since we are interested in non--negative solutions of~\eqref{P},  that {\em the nonlinearity  $f:\Omega\times\mathbb{R}\to\mathbb{R}$ is  a  Carath\'{e}odory function satisfying}
\begin{equation}\label{f4}
f=0\;\;\textit{in  $\Omega\times\mathbb R^-_0$\quad and\quad $f>0$ in }\Omega\times\mathbb R^+.
\end{equation}
A typical prototype for $M$, due to Kirchhoff, is given by
\begin{equation}\label{prot}M(t)=m_0 +b\gamma t^{\gamma-1},\quad m_0,b\ge0,\,\;m_0+b>0,\,\;\gamma\ge1.\end{equation}
When $M(t)>0$ for all $t\in\mathbb R^+_0$, Kirchhoff problems like \eqref{P} are said to be \textit{non--degenerate}
and this happens for example if $m_0>0$ in the model case \eqref{prot}.
While, if $M(0)=0$ but $M(t)>0$ for all $t\in\mathbb R^+$ Kirchhoff problems as~\eqref{P} are called
\textit{degenerate}. Of course, for \eqref{prot} this occurs when $m_0=0$.

The interest in studying problems like \eqref{P} relies not only on mathe\-ma\-ti\-cal purposes but also on their significance in real models, as explained by {\em Caffarelli} in \cite{caf1}--\cite{caf3}, and by {\em V\'azquez} in  \cite{v1}--\cite{v2}; see also~\cite{valpal}.

In the very recent paper \cite{FV}, {\it Fiscella} and {\it Valdinoci} provide a detailed discussion about the
physical meaning underlying the fractional Kirchhoff problems and their applications.
Indeed, {\it Fiscella} and {\it Valdinoci} propose in \cite{FV} a stationary Kirchhoff variational problem,  which models, as a special significant case, the {\em nonlocal} aspect of the tension arising from  nonlocal measurements of the fractional length of the string.
They consider problem \eqref{P}
for the first time in the literature, proving in~\cite[Theorem 1]{FV} the existence of a non--negative solution of \eqref{P}
for any $\lambda\ge \lambda^*>0$, where $\lambda^*$ is an appropriate threshold. They assume that $M$ is increasing in $\mathbb R^+_0$ and $M(0)>0$.

The first goal of this paper is to complete the picture in Theorem~\ref{Th1}, covering the {\it degenerate}
case $M(0)=0$, {\it without requiring any monotonicity assumption on} $M$. To this aim we suppose that
\begin{enumerate}
\item[$(M_1)$]\label{m1}
\textit{there exists} $\gamma\in [1,2^*_s/2)$ \textit{such that} $t M(t)\le \gamma\mathscr M(t)$ \textit{for any} $t\in\mathbb R_0^+$,
\textit{where} $\mathscr M(t)=\displaystyle\int_0^t M(\tau)d\tau$;
\item[$(M_2)$]\label{m2}
\textit{for any} $\tau>0$ \textit{there exists} $\kappa=\kappa(\tau)>0$ \textit{such that} $M(t)\geq \kappa$
\textit{for all} $t\geq \tau$;
\item[$(M_3)$]\label{m11} \textit{there exists} $a>0$ \textit{such that} $M(t)\ge a\,t$ \textit{for any} $t\in[0,1]$.
\end{enumerate}
It is worth noting that the restriction $\gamma\in [1,2^*_s/2)$, required in $(M_1)$, forces $n<2s\gamma/(\gamma-1)$ when $\gamma>1$.
In particular, the more interesting physical case in which $n=3$ and $\gamma=2$ is covered provided that $3/4<s<1$.

A very simple example of a degenerate Kirchhoff function $M$ satisfying $(M_1)$--$(M_3)$ is given by $M(t)=t$ if $t\in[0,1]$
and $M(t)=t^{m-1}$ if $t\ge1$, with $2\le m\le\gamma<2^*_s/2$. While the  prototype \eqref{prot} satisfies $(M_1)$--$(M_3)$,
provided that $\gamma\in[1,2^*_s/2)$ and either $a=b\,\gamma$ if $m_0=0$ and $\gamma\in[1,2]$, or $a=m_0$ if $m_0>0$.

We note in passing that $2<2^*_s/2$ occurs, provided that $n<4s$, a condition which we shall encounter for the {\it degenerate} case of \eqref{P}
in the main existence result of the paper, see Theorem~\ref{Th1}.

As noted in~\cite{colasuonno}, condition $(M_2)$ implies that $M(t)>0$ for any $t>0$, as in the prototype \eqref{prot}.
Hence, {\em in the general but special case $M(0)=m_0>0$}
condition $(M_2)$ yields at once that there exists $a>0$ such that
\begin{equation}\label{m30}
M(t)\ge a\quad\mbox{and}\quad\mathscr M(t)\geq at\quad\mbox{for any }t\in\mathbb R^+_0,
\end{equation}
so that in particular also $(M_3)$ trivially holds. A weaker condition of type \eqref{m30}, but covered in the degenerate case,
already appears in~\cite{ourr} for local Kirchhoff $p$--Laplacian critical Dirichlet problems,
where it is required that $\mathscr M(t)\ge C t^\gamma$ for all $t\in\mathbb R^+_0$ and an appropriate $\gamma\in(0,1]$.
More recently, in~\cite{molica2} assumption $(M_3)$ appears in the stronger form $M(t)\geq at$ for any $t\in\mathbb R^+_0$ and assuming also that $M$ is increasing in $\mathbb R^+_0$.

The nonlinearity $f$ is  related to the elliptic part by assuming that
\begin{enumerate}
\item[$(f_1)$]\label{f1}
$\lim\limits_{t\to 0^+} \dfrac{f(x,t)}{t^{2\gamma-1}}=0$, {\em uniformly in} $x\in\Omega$;
\item[$(f_2)$]\label{f2}
{\em there exists} $q\in (2\gamma,2^*_s)$ {\em such that} $\lim\limits_{t\to \infty} \dfrac{f(x,t)}{t^{q-1}}=0$,
{\em uniformly in} $x\in\Omega$;
\item[$(f_3)$]\label{f3}
{\em there exists} $\sigma\in (2\gamma,2^*_s)$ {\em such that} $\sigma F(x,t)\leq tf(x,t)$
{\em for all} $(x,t)\in\Omega\times\mathbb R^+$, {\em where} $F(x,t)=\displaystyle\int^{t}_{0}f(x,\tau)d\tau$.
\end{enumerate}
The function $f(x,t)=\phi(x)g(t)$, with $\phi\in L^\infty(\Omega)$, $\phi>0$ in $\Omega$,  $g(t)=0$ for $t\le0$  and
$g(t)=\sigma t^{\sigma-1}$ for $t>0$, verifies all the assumptions \eqref{f4}, $(f_1)$--$(f_3)$, provided that $2\gamma<\sigma<q<2^*_s$.

For the main result in the {\it degenerate} case we also require that $2s<n<4s$, which implies in the physical case $n=3$
and $\gamma=2$ that $3/4<s<1$. A restriction which already appears because of $(M_1)$.

\begin{theorem} \label{Th1}  Let $M(0)=0$ and $2s<n<4s$. Assume that $K$, $M$ and $f$ satisfy \eqref{f4}, $(K_1)$--$(K_2)$,
$(M_1)$--$(M_3)$ and $(f_1)$--$(f_3)$. Then there exists $\lambda^*>0$ such that for any $\lambda\ge\lambda^*$
problem \eqref{P} admits a non--trivial non--negative mountain pass solution $u_\lambda$. Moreover
\begin{equation}\label{asym}
\lim_{\lambda\to\infty}\left\|u_\lambda\right\|=0.
\end{equation}
\end{theorem}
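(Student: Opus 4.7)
\emph{Proof plan.} I would work in the Hilbert space $X_0\subset H^s(\R^n)$ of functions vanishing a.e.\ outside $\Omega$, endowed with the $K$-weighted Gagliardo inner product and corresponding norm $\|\cdot\|$. The natural energy functional associated to \eqref{P} is
\begin{equation*}
J_\lambda(u)=\tfrac12\,\mathscr M(\|u\|^2)-\lambda\int_\Omega F(x,u)\,dx-\tfrac{1}{2^*_s}\int_\Omega u_+^{2^*_s}\,dx.
\end{equation*}
Because $F(x,t)=0$ for $t\le 0$ by \eqref{f4}, any non-trivial critical point of $J_\lambda$ must be non-negative: testing $J_\lambda'(u)=0$ against $u_-$ gives $M(\|u\|^2)\langle u,u_-\rangle_{X_0}=0$, and the pointwise inequality $(u_+(x)-u_+(y))(u_-(x)-u_-(y))\le 0$ forces $\langle u,u_-\rangle_{X_0}\le-\|u_-\|^2$, so $(M_2)$ leaves only $u\equiv 0$ or $u_-\equiv 0$.

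For the mountain pass geometry I would exploit the fact that $(M_1)$ forces $t\mapsto\mathscr M(t)/t^\gamma$ to be non-increasing, hence $\mathscr M(t)\ge\mathscr M(1)\,t^\gamma$ for $t\le 1$ and $\mathscr M(t)\le\mathscr M(1)\,t^\gamma$ for $t\ge 1$, with $\mathscr M(1)>0$ granted by $(M_2)$. Combining the first bound with $(f_1),(f_2)$ (which yield $F(x,u)\le\eps|u|^{2\gamma}+C_\eps|u|^q$, $q>2\gamma$) and the fractional Sobolev embedding $X_0\hookrightarrow L^r(\Omega)$ for $r\in[1,2^*_s]$ gives
\begin{equation*}
J_\lambda(u)\ge\bigl(\tfrac{\mathscr M(1)}{2}-\eps\lambda C\bigr)\|u\|^{2\gamma}-\lambda C_\eps\|u\|^q-C\|u\|^{2^*_s}\quad\text{for }\|u\|\le 1,
\end{equation*}
so $J_\lambda\ge\alpha>0$ on a sphere $\|u\|=\rho$ after suitable choice of $\eps,\rho$. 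Dually, the second bound on $\mathscr M$ together with $(f_3)$ (which implies $F(x,t)\ge c\,t^\sigma$ for large $t$, $\sigma>2\gamma$) yields $J_\lambda(tv)\to-\infty$ as $t\to\infty$ for any fixed positive $v\in X_0$, so one finds $e\in X_0$ with $J_\lambda(e)<0$ and a well-defined mountain pass level $c_\lambda>0$.

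The main obstacle is the loss of compactness caused by the critical exponent: the Palais--Smale condition fails at arbitrary levels, and must be established only below a quantitative threshold $c^*>0$. Using $(M_1)$ and $(f_3)$ in the usual way, $J_\lambda(u_n)-\sigma^{-1}\langle J_\lambda'(u_n),u_n\rangle$ dominates $(\tfrac12-\tfrac{\gamma}{\sigma})\mathscr M(\|u_n\|^2)+(\tfrac1\sigma-\tfrac{1}{2^*_s})\int_\Omega u_n^{2^*_s}$, so $(PS)_{c_\lambda}$ sequences are bounded. Along a weakly convergent subsequence $u_n\rightharpoonup u$, one has $J_\lambda'(u)=0$, and the Brezis--Lieb lemma applied to $v_n=u_n-u$, combined with the continuity of $M$ and with $(M_2)$ (to exclude degenerate situations), reduces the strong-convergence issue to the dichotomy $L:=\lim\|v_n\|^2=0$ or $mL=\ell^*$, where $m:=M(\|u\|^2+L)>0$, $\ell^*:=\lim\|v_n\|_{L^{2^*_s}}^{2^*_s}$, and the fractional Sobolev inequality furnishes $\ell^*\le S_K^{-2^*_s/2}L^{2^*_s/2}$. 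In the second branch one obtains $L\ge m^{(n-2s)/(2s)}S_K^{n/(2s)}$, which reinserted in the coercivity estimate produces the desired threshold $c^*=c^*(S_K,\gamma,\mathscr M)>0$; so $c_\lambda<c^*$ forces $L=0$, i.e., strong convergence, and the weak limit $u$ is a non-trivial critical point.

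To close the argument, I fix any positive $v\in X_0$. Since $-\lambda\int_\Omega F(x,tv)<0$ for every $t>0$, the maximum point $t_\lambda$ of $t\mapsto J_\lambda(tv)$ must satisfy $t_\lambda\to 0$ as $\lambda\to\infty$ (otherwise the $\lambda$-penalty would drive $J_\lambda(t_\lambda v)\to-\infty$), whence $\max_{t\ge 0}J_\lambda(tv)\le\tfrac12\mathscr M(t_\lambda^2\|v\|^2)\to 0$ and therefore $c_\lambda\to 0$. Thus $c_\lambda<c^*$ for every $\lambda\ge\lambda^*$, and the mountain pass theorem yields a non-trivial, non-negative critical point $u_\lambda$ at level $c_\lambda\in(0,c^*)$. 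Finally, applying the same coercivity inequality to $u_\lambda$ itself gives $(\tfrac12-\tfrac{\gamma}{\sigma})\mathscr M(\|u_\lambda\|^2)\le c_\lambda\to 0$; since $(M_3)$ yields $\mathscr M(t)\ge(a/2)t^2$ on $[0,1]$, this forces $\|u_\lambda\|\to 0$, establishing \eqref{asym}.
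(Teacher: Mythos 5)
Your overall architecture (mountain pass geometry, boundedness of Palais--Smale sequences via $(M_1)$ and $(f_3)$, Brezis--Lieb splitting, $c_\lambda\to0$ as $\lambda\to\infty$, and the final asymptotics) matches the paper's, but there is a genuine gap at the single point where the degeneracy $M(0)=0$ actually bites: the existence of your compactness threshold $c^*$. From $mL=\ell^*$ and the Sobolev inequality you correctly derive $L\ge m^{(n-2s)/(2s)}S_K^{n/(2s)}$ with $m=M(\|u\|^2+L)$, and then assert that reinserting this yields a threshold $c^*=c^*(S_K,\gamma,\mathscr M)>0$ independent of the sequence, so that $c_\lambda<c^*$ kills the bad branch. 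That step is the standard non--degenerate argument: it works when $\inf M=a>0$, giving $c^*\sim(aS_K)^{n/(2s)}$ (this is essentially Theorem~2(i) of the paper). Here, however, $m$ is \emph{not} uniformly bounded below: since $c_\lambda\to0$ forces $\|u\|^2+L\to0$ as $\lambda\to\infty$ and $M(0)=0$, the quantity $m$ tends to $0$ along with $c_\lambda$, so your lower bound on $L$ (hence on $c_\lambda$ in the bad branch) degenerates at exactly the same rate you are trying to beat. Invoking $(M_2)$ ``to exclude degenerate situations'' only gives $m>0$ for each fixed sequence, not a uniform bound, so no sequence--independent $c^*$ follows from what you wrote.

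The missing idea is a quantitative comparison of the two rates, and this is precisely where the unused hypotheses $(M_3)$ and $2s<n<4s$ must enter. In the bad branch one also has $L\le\alpha_\lambda^2:=\lim_j\|u_j\|^2$, so combining with your inequality and $M(t)\ge a\,t$ on $[0,1]$ gives $\alpha_\lambda^2\ge L\ge\bigl(a\,\alpha_\lambda^2\bigr)^{(n-2s)/(2s)}S_K^{n/(2s)}$, i.e. $\alpha_\lambda^{(4s-n)/s}\ge a^{(n-2s)/(2s)}S_K^{n/(2s)}$. Only because $n<4s$ is the exponent positive, so this contradicts $\alpha_\lambda\to0$ and rules out the bad branch for all $\lambda$ large; equivalently, it shows the bad branch forces $\|u\|^2+L$ to stay above a fixed $T_0>0$, which is what actually produces a uniform level threshold. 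Your proposal never uses $n<4s$, which is the telltale sign that the degenerate case has not been closed. (A minor separate point: the paper deliberately works in $Z=\overline{C_0^\infty(\Omega)}^{X}$ rather than in $X_0$, since for a general bounded $\Omega$ one may have $Z\subsetneq X_0$ and density can fail; this does not affect the analytic argument but should be fixed for the weak formulation to be the intended one.)
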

The symbol $\|\cdot\|$ in Theorem~\ref{Th1} denotes the norm of the solution and test function space $Z$,
which will be introduced in Section~\ref{sec variational}. Of course the restriction $2s<n<4s$ forces to cover only the case
$n\in\{1,2,3\}$.
In \cite{molica2} a problem of type \eqref{P} is treated, when the right hand side  is replaced by two parametric subcritical nonlinearities, assuming
$M$ increasing in $\mathbb R^+_0$, condition  $(M_3)$ in the stronger form $M(t)\ge a\,t$ for any $t\in\mathbb R^+_0$ and that there exists $m>1$ such that
$\liminf_{t\to\infty}t^{-m}\mathscr M(t)>0$. The main theorems of \cite{molica2} show that if $2s<n<4s$ and the subcritical nonlinearities satisfy some technical conditions, the corresponding Dirichlet problem in bounded domains, with Lipschitz boundary,  admits at least three solutions with Gagliardo norms sufficiently small.

In order to prove Theorem \ref{Th1} we somehow adapt the variational approach used by {\it Fiscella} and {\it Valdinoci}
in the proof of \cite[Theorem~1]{FV} and by {\it Colasuonno} and {\it Pucci} in the proof of
\cite[Lemma 3.1]{colasuonno}, but using complectly new and simpler arguments. Clearly, the truncation technique on $M$ used  in~\cite{Fig,FV} is not worth in
the {\it degenerate} case.
Indeed, by $(M_2)$ we still have positive boundedness from below for $M$ outside a neighborhood of zero, but it is not uniform anymore as in \cite{Fig,FV}.
Hence, some difficulties arise in trying to balance the boundedness from above given by truncation, which is an a priori estimate.
For this, assumptions $(M_1)$--$(M_3)$ will play a crucial role.

In the second part of the paper, we extend~\cite[Theorem 1]{FV} proving the existence of a non--trivial non--negative solution of \eqref{P}
for any $\lambda$ sufficiently large, in the {\it non--degenerate} case, under suitable assumptions for $M$ and $f$.
More precisely, on the Kirchhoff function $M$, {\em we require only that}
\begin{equation}\label{m'2}
\inf_{t\in\mathbb R^+_0}M(t)=a>0;
\end{equation}
in other words we demand  that \eqref{m30} holds. Of course \eqref{m'2} implies also the validity of~$(M_2)$ and~$(M_3)$.
The nonlinearity $f$ is assumed to verify
\begin{enumerate}
\item[$(F_1)$]\label{f'1}
$\lim\limits_{t\to 0} \dfrac{f(x,t)}{t}=0$, {\em uniformly in} $x\in\Omega$;
\item[$(F_2)$]\label{f'2}
{\em there exists} $q\in (2,2^*_s)$ {\em such that} $\lim\limits_{t\to \infty} \dfrac{f(x,t)}{t^{q-1}}=0$,
{\em uniformly in} $x\in\Omega$;
\item[$(F_3)$]\label{f'3}
{\em there exists}  $\sigma\in (2,2^*_s)$ {\em such that} $\sigma F(x,t)\leq tf(x,t)$ {\em for any}
$(x,t)\in\Omega\times\mathbb R^+$.
\end{enumerate}
Note that $(F_1)$--$(F_3)$ coincide with the assumptions $(8)$--$(10)$ required in~\cite{FV}.

In the {\it non--degenerate} setting, the prototype \eqref{prot} satisfies \eqref{m'2} with any exponent \mbox{$\gamma\ge1$,} with no upper bounds.
However, in order to give an extension of~\cite[Theorem 1]{FV} and in the meanwhile treating the almost complete {\em non--degenerate} situation we consider and obtain
two different results, stated just below.

\begin{theorem} \label{Th2} Let $K$, $M$ and $f$ satisfy \eqref{f4}, \eqref{m'2}, $(K_1)$--$(K_2)$ and $(F_1)$--$(F_3)$.\\
$\phantom{ii}(i)$ If  there exists $\gamma\in [1,\sigma/2)$ such that  $t M(t)\le \gamma\mathscr M(t)$  for all $t\in\mathbb R_0^+$, 
then there exists $\lambda^*>0$ such that for any $\lambda\ge\lambda^*$ problem~\eqref{P} admits a non--trivial non--negative mountain pass solution $u_\lambda$.\\
$\phantom{i}(ii)$ If $2M(0)<\sigma a$, then for all $\alpha\in(M(0),\sigma a/2)$ there exists \mbox{$\lambda^*=\lambda^*(\alpha)>0$} such that for any $\lambda\ge\lambda^*$ problem~\eqref{P} has a non--trivial non--negative mountain pass solution $u_\lambda$. 

In both cases, the mountain pass solution $u_\lambda$ satisfies property~\eqref{asym}.
\end{theorem}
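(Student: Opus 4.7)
The plan is to argue variationally with the energy functional
\[
J_\lambda(u)=\tfrac12\mathscr M(\|u\|^2)-\lambda\int_\Omega F(x,u)\,dx-\tfrac{1}{2^*_s}\int_\Omega |u|^{2^*_s}\,dx,\qquad u\in Z,
\]
on the fractional Sobolev--type space $Z$; by \eqref{f4} and \eqref{m'2}, critical points of $J_\lambda$ are weak solutions of~\eqref{P}, and testing with the negative part yields non-negativity. The overall scheme will follow the classical mountain pass / Brezis--Nirenberg pattern: verify the mountain pass geometry, establish Palais--Smale compactness strictly below the non-compactness threshold, force the mountain pass level below that threshold by exploiting $\lambda$, and extract \eqref{asym} from the same level estimate.

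For part~(i), I would re-run the proof of Theorem~\ref{Th1} almost verbatim, the only structural change being that the Ambrosetti--Rabinowitz exponent $\sigma$ now takes the role played by $2^*_s$ there. In particular, the decisive algebraic inequality
\[
\tfrac12\mathscr M(t)-\sigma^{-1}tM(t)\ge\bigl(\tfrac12-\tfrac{\gamma}{\sigma}\bigr)\mathscr M(t)\ge 0,
\]
which drives the boundedness of Palais--Smale sequences, remains available because $\gamma<\sigma/2$. Moreover, \eqref{m'2} automatically delivers $(M_2)$ and $(M_3)$ in the strong form $\mathscr M(t)\ge at$ on $\mathbb R^+_0$, so the dimensional restriction $n<4s$ of Theorem~\ref{Th1} is not needed.

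For part~(ii), my strategy is to reduce to part~(i) by truncating $M$. Given $\alpha\in(M(0),\sigma a/2)$, continuity of $M$ at~$0$ provides $\tau_\alpha>0$ with $M(t)\le\alpha$ on $[0,\tau_\alpha]$; set
\[
\widetilde M(t)=\begin{cases}M(t)&\text{if }0\le t\le\tau_\alpha,\\ M(\tau_\alpha)&\text{if }t>\tau_\alpha,\end{cases}\qquad \widetilde{\mathscr M}(t)=\int_0^t\widetilde M(\tau)\,d\tau.
\]
Then $\widetilde M$ inherits \eqref{m'2} with the same $a$, is bounded above by $\alpha$, and satisfies $t\widetilde M(t)\le\alpha t\le(\alpha/a)\widetilde{\mathscr M}(t)$, so the exponent $\gamma:=\alpha/a$ lies in $(1,\sigma/2)$ and $\widetilde M$ falls into the framework of part~(i). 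Applying~(i) to the truncated functional yields, for every $\lambda\ge\lambda^*(\alpha)$, a non-negative mountain pass solution $u_\lambda$ of the truncated problem with $\|u_\lambda\|\to 0$ as $\lambda\to\infty$. Consequently $\|u_\lambda\|^2\le\tau_\alpha$ for $\lambda$ sufficiently large, whence $\widetilde M(\|u_\lambda\|^2)=M(\|u_\lambda\|^2)$, so that $u_\lambda$ in fact solves~\eqref{P} and \eqref{asym} is inherited directly.

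The main obstacle common to both parts is the twofold quantitative control of the mountain pass level $c_\lambda$: it must be forced strictly below the non-compactness threshold governed by the best fractional Sobolev constant and the ellipticity constant $a$, and simultaneously driven to $0$ to deliver \eqref{asym}. Along the ray $t\mapsto tu_0$ attached to a fixed non-negative $u_0\in Z\setminus\{0\}$, I would combine the upper bound $\mathscr M(t^2\|u_0\|^2)\le Ct^{2\gamma}$ coming from the $(M_1)$--type inequality (respectively from its truncated counterpart in part~(ii)) with the superlinear lower bound $\int_\Omega F(x,tu_0)\,dx\ge ct^\sigma$ supplied by~$(F_3)$; maximising in $t$ then yields both the sub-threshold estimate and the rate $c_\lambda\to 0$ that feeds \eqref{asym}.
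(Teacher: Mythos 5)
Your overall architecture matches the paper's. For part (i) you rerun the degenerate argument of Theorem~\ref{Th1} with $\sigma/2$ replacing $2^*_s/2$ as the ceiling for $\gamma$, and your key inequality $\tfrac12\mathscr M(t)-\sigma^{-1}tM(t)\ge(\tfrac12-\tfrac{\gamma}{\sigma})\mathscr M(t)\ge(\tfrac12-\tfrac{\gamma}{\sigma})\,a\,t$ is equivalent to the one used there; you also correctly observe that \eqref{m'2} removes the dimensional restriction $n<4s$, since now $M(\alpha_\lambda^2)\ge a$ turns the key relation into $\ell_\lambda^{2^*_s}\ge aS\ell_\lambda^2$, so either $\ell_\lambda=0$ or $\ell_\lambda^{2^*_s-2}\ge aS$, which is incompatible with $\ell_\lambda\to0$. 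For part (ii) your truncation is packaged differently from the paper's: you truncate in the \emph{argument} (freezing $M$ beyond a point $\tau_\alpha$ where $M\le\alpha$) and then reduce literally to part (i) with $\gamma=\alpha/a\in(1,\sigma/2)$, whereas the paper truncates in the \emph{value} ($M_\alpha=\min\{M,\alpha\}$) and reruns the estimates with the coefficient $a/2-\alpha/\sigma>0$. Both are legitimate; your reduction is more economical because it reuses part (i) wholesale, and your closing step (use \eqref{asym} to get $\|u_\lambda\|^2\le\tau_\alpha$ for large $\lambda$, so the truncated and original equations coincide at $u_\lambda$) is exactly the paper's.

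The one step that would fail as written is your derivation of $c_\lambda\to0$ by ``maximising in $t$'' along the ray $t\mapsto tu_0$. Neither of the two bounds you invoke holds on the whole ray: the upper bound $\mathscr M(t)\le Ct^{\gamma}$ coming from the $(M_1)$--type inequality is valid only for $t$ bounded away from $0$ (near $0$ the inequality reverses, cf.~\eqref{m4}; indeed in the non--degenerate case $\mathscr M(t)\ge at$, which is not $O(t^{\gamma})$ near $0$ when $\gamma>1$), and the lower bound $\int_\Omega F(x,tu_0)\,dx\ge ct^{\sigma}$ from $(F_3)$ holds only up to an additive constant, cf.~\eqref{f31}, which after multiplication by $\lambda$ destroys the naive optimisation; moreover $\sup_{0\le t\le\delta}\mathcal J_\lambda(tu_0)$ is not controlled by any negative power of $\lambda$. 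The repair is the paper's Lemma~\ref{infinito}: take the maximiser $t_\lambda$ of $t\mapsto\mathcal J_\lambda(te)$, use the stationarity identity $\mathcal J_\lambda'(t_\lambda e)(e)=0$ together with $(M_1)$ and the positivity of the critical term to show first that $\{t_\lambda\}$ is bounded and then, via the divergence of $\lambda\int_\Omega f(x,t_\lambda e)e\,dx$, that $t_\lambda\to0$; since $F\ge0$ this gives $c_\lambda\le\tfrac12\mathscr M(t_\lambda^2\|e\|^2)\to0$ by continuity (alternatively, split the ray at a small $\delta$ and let $\delta\to0$ after $\lambda\to\infty$). With that lemma supplied, the rest of your plan goes through.
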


Clearly the request $2M(0)<\sigma a$ is automatic whenever $M(0)=a$, being $\sigma>2$ by $(F_3)$. The assumption $M(0)=a$, together with monotonicity of $M$, was assumed in \cite{FV}. A very interesting open problem is to construct a non--trivial solution $u_\lambda$ for \eqref{P} when $\sigma a\le2M(0)$ and the growth condition on $M$ stated in $(i)$ does not hold.

As we shall see, the  proof of Theorem~$\ref{Th2}$--$(i)$ is performed just adapting the approach used in Theorem~\ref{Th1}
and, as in the {\it degenerate} case, we get existence for any $\lambda$ beyond a threshold $\lambda^*>0$ and again extend \cite[Theorem 1]{FV} in several directions. Theorem~\ref{Th2}--$(ii)$ is proved via a truncation argument on $M$, since the Kirchhoff function $M$ could increase too quickly with respect
the other terms of problem \eqref{P}. This argument was already used in~\cite{FV}, as well as in~\cite{Fig}. With Theorem~\ref{Th2} we extend \cite[Theorem 1]{FV} in several directions.
First of all, we  do not require any longer $M(0)=a$, and even in Theorem~\ref{Th2}--$(ii)$ we only assume the milder restriction $2M(0)<\sigma a$, which is
automatic when $M(0)=a$. 

It is worth mentioning that for the Kirchhoff function 
$$M(t)=(1+t)^m+(1+t)^{-1},\quad t\in\mathbb R^+_0,\quad m\in(0,1),$$
we have $M(0)=2$ and $a=m^{-m/(m+1)}(1+m)<2$. Furthermore,  if  $m$ is so small that $m+1<\sigma/2$, then 
$M$ verifies all the assumptions of Theorem~$\ref{Th2}$--$(i)$, with $\gamma=m+1$. While, if $m\in(0,1)$ is sufficiently large, then $2\,M(0)=4<\sigma\,a$, being $2<\sigma$ by $(F_3)$. Thus,
$M$ verifies all the assumptions of Theorem~$\ref{Th2}$--$(ii)$.
It is therefore evident that Theorem~$\ref{Th2}$ is applicable even when neither $M$
is increasing in $\mathbb R^+_0$, nor $M(0)=a$.

Last but not least, throughout the paper, we assume on $K$ only $(K_1)$
and $(K_2)$, without asking that $K$ is controlled from above by the fractional kernel $\left|x\right|^{-(n+2s)}$, as  in~\cite{FV}.

Several recent papers are focused both on theoretical aspects and applications related to nonlocal fractional models.
Concerning the critical case, in \cite{colorado} the effects of lower order perturbations are studied
for the existence of positive solutions of critical elliptic problems involving the fractional Laplacian.
In \cite{colorado2}, see also the references therein, existence and multiplicity results are established
for a critical fractional equation with concave--convex  nonlinearities.
In \cite{sv3} a Brezis--Nirenberg existence result for nonlocal fractional equations is proved through variational methods,
while in \cite{sv2} the authors extend the theorems got in \cite{sv3} to a more general problem, again involving an integro--differential
nonlocal operator and critical terms. Furthermore, a multiplicity result for a Brezis--Nirenberg problem in nonlocal fractional setting
is given in \cite{fms}, where it is shown that in a suitable left neighborhood of any eigenvalue of $-\mathcal L_K$ (with Dirichlet boundary data)
the number of solutions is at least twice the multiplicity of the eigenvalue.
For multiplicity results on a {\it non--degenerate} stationary Kirchhoff problem involving $-\mathcal L_K$ and a nonlinearity of integral
form we refer to~\cite{mr} and the references therein.

For evolutionary Kirchhoff problems it is worth mentioning the paper \cite{acp}, which is concerned with lifespan estimates of maximal solutions
for {\it degenerate} polyharmonic Kirchhoff problems. The technique goes back to \cite{pucciserrin} and it is based on the construction of a
Lyapunov function $\mathscr Z$ which lives as longer as any local solution $u$ does. The goal is to show that $\mathscr Z$
becomes unbounded in finite time, proving the non--continuation of $u$.
A priori estimates for the maximal living time $T$ are obtained exploiting in a suitable way the non--existence tools already adopted in
\cite{aps} for (possibly {\it degenerate}) $p(x)$--Kirchhoff systems involving nonlinear damping and source terms.
More recently, in \cite{colasuonno} a multiplicity result is obtained for a {\it degenerate} stationary
Kirchhoff problem governed by the $p(x)$--polyharmonic operator, with a subcritical term, through the Mountain Pass theorem,
while in \cite{acp2} existence and multiplicity of solutions of certain eigenvalue stationary $p$--polyharmonic Kirchhoff problems are
considered, also in a {\it degenerate} setting.

Furthermore, the very interesting paper \cite{puccisal} treats the question of the existence and multiplicity of nontrivial non--negative entire solutions
of a Kirchhoff eigenvalue problem, involving critical nonlinearities and nonlocal elliptic operators, but only in the {\it non--degenerate} setting.
We also refer to~\cite{Fig2} and the references therein for the large literature on Kirchhoff--Schr\"odinger type equations in the {\it non--degenerate} case.

Finally, paper \cite{ourr} deserves a special mention for our purposes.
In our knowledge, it is the only work facing {\it degenerate} Kirchhoff problems involving a critical nonlinearity.
To overcome the lack of compactness at critical level the author uses a compactness result which exploits the pointwise
convergence of the gradients of a sequence of solutions.  As we shall see,
we do not have derivatives of solutions in $Z$, but a sort of integro--differentiation, cf. \eqref{normaz}.
Thus, using the celebrated {\it Brezis} and {\it Lieb} lemma, as explained in the proof of Lemma~\ref{palais},
an adaptation of the variational approach of~\cite{ourr} could be possible in the fractional framework of this paper, but we prefer to use
a new approach.

Inspired by the above papers and motivated by the poor literature in the {\it degenerate} critical case,
we provide existence and asymptotic results for \eqref{P}.

The paper is organized as follows. In Section~\ref{sec variational} we discuss the variational formulation of the problem and show that solutions of~\eqref{P} are non--negative.
In Sections~\ref{sec degenerate} and \ref{sec non-degenerate}  we prove
Theorems~\ref{Th1} and~\ref{Th2}, respectively.\section{Variational formulation}\label{sec variational}

Problem \eqref{P} has a variational structure and the natural space where finding solutions is the homogeneous fractional Sobolev space $H^s_0(\Omega)$,
see \cite{valpal}. In order to study \eqref{P} it is important to encode the ``boundary condition'' $u=0$ in
$\mathcal C\Omega=\mathbb{R}^n\setminus\Omega$ (which is different from the classical case of the Laplacian, where it is required $u=0$ on $\partial \Omega$) in the weak formulation,
by considering also that the interaction between $\Omega$ and its complementary ${\mathcal C}\Omega$ in $\mathbb{R}^n$ gives a positive contribution
in the norm $\left\|u\right\|_{H^s(\mathbb{R}^n)}$. This is exactly the nonlocal nature of the elliptic operator, as it is transparent  from \eqref{nonloc}. The functional space that takes into account this boundary condition will be denoted by $Z$
and it was introduced in \cite{fiscella} in the following way.

First, let $X$ be the linear space of Lebesgue measurable functions \mbox{$u:\mathbb{R}^n\to\mathbb{R}$} whose restrictions to $\Omega$ belong to $L^2(\Omega)$ and such that
	\[\mbox{the map }
(x,y)\mapsto (u(x)-u(y))^2 K(x-y) \mbox{ is in } L^1\big(Q,dxdy\big),
\]
where $Q=\mathbb{R}^{2n}\setminus\left({\mathcal C}\Omega\times{\mathcal C}\Omega\right)$.
The space $X$ is endowed with the norm
\begin{equation}\label{norma}
\left\|u\right\|_{X}=\Big(\|u\|_{L^2(\Omega)}+\iint_Q |u(x)-u(y)|^2K(x-y)dxdy\Big)^{1/2}.
\end{equation}
It is easy to see that bounded and Lipschitz functions belong to $X$, thus $X$ is not reduced to
$\left\{0\right\}$; see \cite{sv3,sv2} for further details on the space $X$.

The functional space $Z$ denotes the closure of $C^{\infty}_{0}(\Omega)$ in $X$.
The scalar product defined for any $\varphi$, $\phi\in Z$ as
\begin{equation}\label{scal}
\langle \varphi, \phi\rangle_Z=\iint_{Q} (\varphi(x)-\varphi(y))(\phi(x)-\phi(y))K(x-y) dxdy,
\end{equation}
makes $Z$ a Hilbert space. The norm
\begin{equation}\label{normaz}
\|u\|_{Z}=\Big(\iint_Q |u(x)-u(y)|^2K(x-y)dxdy\Big)^{1/2}
\end{equation}
is equivalent to the usual one defined in \eqref{norma}, as proved in \cite[Lemma~4]{fiscella}.
Note that in \eqref{norma}--\eqref{normaz} the integrals can be extended to all $\mathbb{R}^{n}$ and $\mathbb{R}^{2n}$, since  $u=0$ a.e.
in $\mathcal C\Omega$. {\it From now on, in order to simplify the notation, we denote $\langle \cdot, \cdot\rangle_Z$ and
$\|\cdot\|_{Z}$ by $\langle \cdot, \cdot\rangle$ and $\|\cdot\|$, respectively.}

The Hilbert space $Z=(Z, \|\cdot\|)$ is continuously embedded in $H^s_0(\Omega)$ and compactly embedded
in $L^r(\Omega)$ for any $r\in[1, 2^*_s)$, see~\cite[Lemma 4]{fiscella}.

Even if $Z$ is not a real space of functions but a density space, the choice of this functional space is an improvement with respect of the usual space
$X_0=\left\{u\in X:u=0\;\,\mbox{a.e. in }\mathbb{R}^n\setminus
\Omega\right\}$, where many authors set their nonlocal variational problems, e.g. \cite{colorado}--\cite{capella,molica,sv1}--\cite{sv2}.
Indeed, the density result proved in \cite[Theorem 6]{fsv} does not hold true without more restrictive conditions on the boundary $\partial\Omega$;
see in particular \cite[Remark 7]{fsv}. Thus, when $\Omega$ is simply a bounded domain of $\mathbb{R}^n$, we just have $Z\subset X_0$, with possibly $Z\not= X_0$.

The weak formulation of \eqref{P} is as follows. We say that $u\in Z$ is a (weak) {\em solution} of~\eqref{P}~if
\begin{equation}\label{wf}M\left(\|u\|^2\right)\langle u, \varphi\rangle=
\lambda\displaystyle{\int_\Omega f(x, u(x))\varphi(x)dx}+\int_\Omega \left|u(x)\right|^{2^*_s-2}u(x)\varphi(x)dx\end{equation}
for all $\varphi \in Z$.

The structural assumptions on $\Omega$, $M$, $f$ and $K$ assure that all the integrals in \eqref{wf} are well defined if $u$, $\varphi\in Z$.
Moreover, it is worth to say that the odd part of $K$ does not give contribution in the integral of the left--hand side of \eqref{wf}.
Indeed, write $K=K_e+K_o$, where for all $x\in\mathbb R^n\setminus\{0\}$
$$K_e(x)=\frac{K(x)+K(-x)}{2}\quad\mbox{and}\quad K_o(x)=\frac{K(x)-K(-x)}{2}.$$
Then, it is apparent that for all $u$ and $\varphi \in Z$
$$\langle u, \varphi\rangle=\iint_{\mathbb{R}^{2n}} (u(x)-u(y))(\varphi(x)-\varphi(y))K_e(x-y) dxdy.$$
Therefore, {\em it is not restrictive to assume $K$ even}.

According to the variational nature, (weak) solutions of \eqref{P} correspond to critical points of the associated
Euler--Lagrange functional $\mathcal J_\lambda:Z\to \mathbb{R}$ defined by
$$\mathcal J_\lambda(u)=\frac{1}{2}\mathscr{M}(\left\|u\right\|^2)-\lambda\int_\Omega F(x, u(x))dx
-\frac{1}{2^*_s}\|u\|^{2^*_s}_{2^*_s}$$
for all $u\in Z$. Note that $\mathcal J_\lambda$ is a $C^1(Z)$ functional  for any $u \in Z$
\begin{equation}\label{derivata}
\mathcal J'_\lambda(u)(\varphi)= M(\left\|u\right\|^2)\langle u, \varphi\rangle-\lambda\int_\Omega f(x, u(x))\varphi(x)dx-\int_\Omega \left|u(x)\right|^{2^*_s-2}u(x)\varphi(x)dx
\end{equation}
for all $\varphi \in Z$.

We recall that throughout the paper we assume \eqref{f4} and $(K_1)$--$(K_2)$.

\begin{proposition}\label{prop} For all $\lambda\in\mathbb R$ any solution of \eqref{P} is non--negative.
\end{proposition}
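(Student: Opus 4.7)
The plan is to test the weak formulation \eqref{wf} with $\varphi=u^-:=\max\{-u,0\}$. First I need to check that $u^-\in Z$, which follows from the standard fact that positive/negative parts preserve fractional Sobolev regularity and vanish outside $\Omega$ when $u$ does; hence $u^-$ is admissible as a test function.

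Next I would evaluate the two sides of the identity $M(\|u\|^2)\langle u,u^-\rangle=\lambda\int_\Omega f(x,u)u^-dx+\int_\Omega|u|^{2^*_s-2}uu^-dx$. On the right, assumption \eqref{f4} forces $f(x,u)u^-\equiv 0$ a.e.\ (on $\{u\le 0\}$ the nonlinearity vanishes, on $\{u>0\}$ the negative part vanishes), so the $\lambda$-term disappears irrespective of the sign of $\lambda$. A direct pointwise computation shows $|u|^{2^*_s-2}u\cdot u^-=-(u^-)^{2^*_s}$, since on $\{u<0\}$ one has $u^-=-u=|u|$ and $|u|^{2^*_s-2}u=-|u|^{2^*_s-1}$; thus the right-hand side equals $-\|u^-\|_{2^*_s}^{2^*_s}$.

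The genuinely nonlocal step is the analysis of $\langle u,u^-\rangle$. Splitting $u=u^+-u^-$ with $u^+u^-\equiv 0$ pointwise, one establishes the key pointwise inequality
\[
(u^+(x)-u^+(y))(u^-(x)-u^-(y))=-u^+(x)u^-(y)-u^+(y)u^-(x)\le 0,
\]
the nonlocal analogue of $\nabla u^+\cdot\nabla u^-=0$. Expanding $(u(x)-u(y))(u^-(x)-u^-(y))$ via this identity and integrating against $K\ge 0$ yields
\[
\langle u,u^-\rangle=\langle u^+,u^-\rangle-\|u^-\|^2\le -\|u^-\|^2.
\]
Combining with the previous step gives the master inequality $M(\|u\|^2)\|u^-\|^2\le\|u^-\|_{2^*_s}^{2^*_s}$.

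The delicate closing step is to deduce $u^-\equiv 0$. If $\|u\|=0$ there is nothing to prove; otherwise, when $M(\|u\|^2)=0$ the equality $M(\|u\|^2)\langle u,u^-\rangle=-\|u^-\|_{2^*_s}^{2^*_s}$ immediately forces $\|u^-\|_{2^*_s}=0$, hence $u^-=0$. The hard case is when $M(\|u\|^2)>0$: here I expect the argument to exploit the stronger equality $M(\|u\|^2)[\|u^-\|^2+|\langle u^+,u^-\rangle|]=\|u^-\|_{2^*_s}^{2^*_s}$ together with the Sobolev embedding $Z\hookrightarrow L^{2^*_s}(\Omega)$ and the sign information $\|u^-\|\le\|u\|$, invoking $(M_2)$ to keep $M(\|u\|^2)$ bounded below. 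This final step is the main obstacle, because the inequality alone yields only a lower bound on $\|u^-\|$ when nonzero rather than a contradiction, so the structural hypotheses on $M$ must be used carefully to rule this out.
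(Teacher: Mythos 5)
Your preparatory steps coincide with the paper's own proof: the paper also tests \eqref{wf} with $\varphi=u^-$ (invoking \cite[Lemma 8]{FV} for $u^-\in Z$), kills the $\lambda$--term via \eqref{f4}, and uses exactly your pointwise identity
$$(u(x)-u(y))(u^-(x)-u^-(y))=-u^+(x)u^-(y)-u^-(x)u^+(y)-\big[u^-(x)-u^-(y)\big]^2\le-\left|u^-(x)-u^-(y)\right|^2$$
to conclude $\langle u,u^-\rangle\le-\|u^-\|^2$. Where the two texts part ways is the sign of the critical term. The paper records $\int_\Omega|u|^{2^*_s-2}u\,u^-\,dx=+\|u^-\|_{2^*_s}^{2^*_s}$ in \eqref{u-}, so its tested identity becomes $0\ge M(\|u\|^2)\langle u,u^-\rangle\ge\|u^-\|_{2^*_s}^{2^*_s}\ge0$, and $u^-\equiv0$ follows in one line with no case distinction and no use of $(M_2)$, $(M_3)$ or the Sobolev embedding. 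With your (negative) sign both sides of the identity are nonpositive and nothing so immediate follows. Note that your sign is the one consistent with the convention $u^-=\max\{-u,0\}$ needed for the displayed pointwise inequality: on $\{u<0\}$ one has $|u|^{2^*_s-2}u\,u^-=-(u^-)^{2^*_s}$. You should be aware that the paper's chain of inequalities rests on combining the $+$ sign with that pointwise inequality, and the two require opposite conventions for $u^-$.

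The concrete gap is your final paragraph: you never prove $u^-\equiv0$ when $M(\|u\|^2)>0$, and the tools you list cannot do it. From $M(\|u\|^2)\|u^-\|^2\le\|u^-\|_{2^*_s}^{2^*_s}$ and the Sobolev inequality $\|u^-\|^2\ge S\|u^-\|_{2^*_s}^2$ you obtain only the dichotomy $u^-=0$ or $\|u^-\|_{2^*_s}^{2^*_s-2}\ge S\,M(\|u\|^2)$; condition $(M_2)$ makes that lower bound positive but does not exclude the second alternative, and your refined equality $M(\|u\|^2)\big[\|u^-\|^2+|\langle u^+,u^-\rangle|\big]=\|u^-\|_{2^*_s}^{2^*_s}$ only adds a nonnegative quantity to the smaller side, which helps nothing. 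This is the standard obstruction for critical nonlinearities: testing with the negative part rules out only \emph{small} negative parts. So as written your argument establishes the degenerate case $M(\|u\|^2)=0$ and the a priori bound, but not the Proposition; to finish you must either justify the $+$ sign used in \eqref{u-} under a single consistent convention for $u^\pm$, or supply a genuinely different argument for the case $M(\|u\|^2)>0$.
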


\begin{proof} Let $u_\lambda$ be a solution of problem \eqref{P}. By \cite[Lemma 8]{FV} we have $u^-_\lambda\in Z$. Thus, by \eqref{wf}
with $\varphi=u^-_\lambda$ we get
\begin{equation}\label{u-}M(\left\|u_\lambda\right\|^2)\langle u_\lambda, u^-_\lambda\rangle=
\lambda\int_\Omega f(x, u_\lambda(x))u^-_\lambda(x)dx+\|u^-_\lambda\|^{2^*_s}_{2^*_s}.\end{equation}
We observe that for a.a. $x$, $y\in\mathbb{R}^n$
$$\begin{aligned}
(u_\lambda(x)&-u_\lambda(y))(u^-_\lambda(x)-u^-_\lambda(y))\\
&=-u^+_\lambda(x)u^-_\lambda(y)-u^-_\lambda(x)u^+_\lambda(y)-\big[u^-_\lambda(x)-u^-_\lambda(y)\big]^2\\
&\leq-\left|u^-_\lambda(x)-u^-_\lambda(y)\right|^2.
\end{aligned}$$
Moreover, $f(x, u_\lambda(x))u^-_\lambda(x)=0$ for a.a. $x\in\mathbb{R}^n$ by \eqref{f4}.
Hence, by \eqref{u-} it follows that
$$0\ge -M(\left\|u_\lambda\right\|^2)\iint_{\mathbb{R}^{2n}} \left|u^-_\lambda(x)-u^-_\lambda(y)\right|^2 K(x-y) dxdy\ge\|u^-_\lambda\|^{2^*_s}_{2^*_s},$$
being $M\ge0$. Thus $u^-_\lambda\equiv0$, that is $u_\lambda$ is non--negative, as required.
\end{proof}


\section{The degenerate case}\label{sec degenerate}

In this section we discuss the main novelty of the paper, that is the existence of non--trivial solutions of \eqref{P} in
the {\it degenerate} case $M(0)=0$. {\it Throughout this section we assume \eqref{f4}, $(K_1)$--$(K_2)$, $(M_1)$--$(M_2)$ and $(f_1)$--$(f_3)$,} without further mentioning.

In order to find the critical points of $\mathcal J_\lambda$, we intend to apply the Mountain Pass theorem of {\em Ambrosetti} and {\em Rabinowitz} \cite{ar}.
For this, we have to check that $\mathcal J_\lambda$ possesses a suitable geometrical structure and that it satisfies the Palais--Smale compactness condition.
To prove all these properties, we need appropriate lower and upper bounds for $M$, $f$ and their primitives.

As noted in \cite{colasuonno}, condition $(M_2)$ implies that $M(t)>0$ for any $t>0$ and consequently, using $(M_1)$, we get
\begin{equation}\label{m3}
\mathscr M(t)\geq \mathscr M(1)t^\gamma\quad\mbox{for any }t\in[0,1].
\end{equation}
Of course also condition $(M_3)$ implies that $\mathscr M(t)\geq a\,t^2/2$ for all $t\in[0,1]$ and this together with \eqref{m3} gives  $\mathscr M(t)\geq c\,t^m$ for all $t\in[0,1]$,
where $m=\min\{2,\,\gamma\}$.
Similarly, for any $\varepsilon>0$ there exists $r_\varepsilon=r(\varepsilon)=\mathscr M(\varepsilon)/\varepsilon^\gamma >0$ such that
\begin{equation}\label{m4}
\mathscr M(t)\leq r_\varepsilon t^\gamma\quad\mbox{for any }t\geq\varepsilon.
\end{equation}
Hence, property \eqref{m4} yields that
\begin{equation}\label{m5}
\lim_{t\to\infty}\frac{\mathscr M(t)}{t^{\sigma/2}}=0,
\end{equation}
being $\sigma>2\gamma$ by $(f_3)$.

Concerning the nonlinear term $f$, assumptions $(f_1)$ and $(f_2)$ give subcritical growths.
That is, for any $\varepsilon>0$ there exists $\delta_\varepsilon=\delta(\varepsilon)>0$ such that
\begin{equation}\label{f11}
0\le f(x,t)\leq 2\gamma\varepsilon\left|t\right|^{2\gamma-1} +q\delta_\varepsilon\left|t\right|^{q-1}\quad\mbox{for any }(x,t)\in\Omega\times\mathbb{R}
\end{equation}
by \eqref{f4} and so for the primitive
\begin{equation}\label{f12}
0\le F(x,t)\leq \varepsilon\left|t\right|^{2\gamma} +\delta_\varepsilon\left|t\right|^q\quad\mbox{for any }(x,t)\in\Omega\times\mathbb{R}.
\end{equation}
Finally,  $(f_3)$ implies that $F(x,t)\geq c(x) t^\sigma$ for all $(x,t)\in\Omega\times[1,\infty)$,
where $c(x)=F(x,1)$ is in $L^\infty(\Omega)$ by \eqref{f12}, with $\varepsilon=t=1$. In conclusion, for any $(x,t)\in\Omega\times\mathbb{R^+}$
\begin{equation}\label{f31}
F(x,t)\geq c(x) t^\sigma-C(x),\quad C(x)=\max_{t\in[0,1]}\big|F(x,t)-c(x) t^\sigma\big|.
\end{equation}
Again $C\in L^\infty(\Omega)$  by \eqref{f12}.\smallskip

In the next two lemmas we prove that for any $\lambda>0$ the functional $\mathcal J_\lambda$ satisfies all
the geometric features required by the Mountain Pass theorem. Later, in the crucial and most delicate Lemma~\ref{palais} we show that
the functional $\mathcal J_\lambda$ satisfies the Palais--Smale condition at a suitable level $c_\lambda$ beyond a threshold $\lambda^*>0$.

\begin{lemma}\label{mp1} For any $\lambda>0$ there exist two positive constants $\alpha$ and $\rho$ such that
$\mathcal J_\lambda(u)\geq\alpha>0$ for any $u\in Z$, with $\left\|u\right\|=\rho$.
\end{lemma}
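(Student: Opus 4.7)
The plan is to combine the subcritical growth of $F$ given by \eqref{f12}, the near--zero lower bound on the Kirchhoff primitive $\mathscr M$, and the continuous Sobolev embeddings $Z\hookrightarrow L^r(\Omega)$ for $r\in[1,2^*_s]$ to show that $\mathcal J_\lambda$ is bounded below by a positive constant on a small sphere in $Z$.

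First I would recall the embedding inequalities. By the remark just after \eqref{normaz}, there exists a constant $C_r>0$ such that
$\|u\|_r\le C_r\|u\|$ for every $u\in Z$ and every $r\in[1,2^*_s]$; in particular this holds for $r=2\gamma$, $r=q$ and $r=2^*_s$, since $2\gamma<2^*_s$ (by $(M_1)$) and $q,\sigma\in(2\gamma,2^*_s)$. Then I would fix $\varepsilon>0$ and apply \eqref{f12} to bound
\[
\int_\Omega F(x,u(x))\,dx\le \varepsilon\,C_{2\gamma}^{2\gamma}\|u\|^{2\gamma}+\delta_\varepsilon C_q^q\|u\|^q.
\]

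Next I would invoke the lower bound on $\mathscr M$ derived from $(M_1)$ and $(M_3)$ and recalled in the paragraph containing \eqref{m3}: there exists $c>0$ such that $\mathscr M(t)\ge c\,t^{m}$ for all $t\in[0,1]$, where $m=\min\{2,\gamma\}$. Restricting to $u\in Z$ with $\|u\|\le 1$ and combining the previous estimates with the Sobolev bound $\|u\|_{2^*_s}^{2^*_s}\le C_{2^*_s}^{2^*_s}\|u\|^{2^*_s}$, I obtain
\[
\mathcal J_\lambda(u)\ge \frac{c}{2}\|u\|^{2m}-\lambda\varepsilon C_{2\gamma}^{2\gamma}\|u\|^{2\gamma}-\lambda\delta_\varepsilon C_q^q\|u\|^q-\frac{C_{2^*_s}^{2^*_s}}{2^*_s}\|u\|^{2^*_s}.
\]
Since $2\gamma\ge 2m$ and $\|u\|\le 1$, one has $\|u\|^{2\gamma}\le\|u\|^{2m}$, so the middle term can be absorbed: choose $\varepsilon=\varepsilon(\lambda)>0$ so small that $\lambda\varepsilon C_{2\gamma}^{2\gamma}\le c/4$. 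This gives
\[
\mathcal J_\lambda(u)\ge \|u\|^{2m}\!\left(\frac{c}{4}-\lambda\delta_\varepsilon C_q^q\|u\|^{q-2m}-\frac{C_{2^*_s}^{2^*_s}}{2^*_s}\|u\|^{2^*_s-2m}\right).
\]

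Finally, I would note that $q>2\gamma\ge 2m$ and $2^*_s>2\gamma\ge 2m$, so both exponents $q-2m$ and $2^*_s-2m$ are strictly positive. Hence the expression in parentheses tends to $c/4$ as $\|u\|\to 0^+$, and so there exists $\rho=\rho(\lambda)\in(0,1]$ small enough that for $\|u\|=\rho$ the parenthesis exceeds $c/8$, yielding
\[
\mathcal J_\lambda(u)\ge \frac{c}{8}\,\rho^{2m}=:\alpha>0.
\]
There is no real obstacle here; the only mild subtlety is keeping track that $m=\min\{2,\gamma\}$ may be strictly less than $\gamma$ when $\gamma>2$, so one must absorb the $\|u\|^{2\gamma}$ term into $\|u\|^{2m}$ using $\|u\|\le 1$ rather than directly. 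The constants $\alpha$ and $\rho$ depend on $\lambda$, but this is exactly what the statement requires.
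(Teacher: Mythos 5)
Your argument is correct and is essentially the paper's proof: the same near--zero lower bound on $\mathscr M$, the same growth estimate \eqref{f12} combined with the fractional Sobolev embeddings, absorption of the $\varepsilon$--term for $\|u\|\le 1$, and the observation that $q$ and $2^*_s$ strictly exceed the leading exponent so a small sphere works. The only cosmetic difference is that the paper keeps the leading term as $\tfrac12\mathscr M(1)\|u\|^{2\gamma}$ using \eqref{m3} alone (so $(M_3)$ is not actually needed for this lemma), whereas you use the combined bound $\mathscr M(t)\ge c\,t^{m}$ with $m=\min\{2,\gamma\}$.
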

\begin{proof} Fix $\lambda>0$ and take $u\in Z$, with $\|u\|\le1$. Let $\varepsilon>0$ be sufficiently small to be chosen later. By \eqref{m3} and \eqref{f12}
$$\mathcal J_\lambda(u)\geq \frac{\mathscr M(1)}{2}\left\|u\right\|^{2\gamma}-\varepsilon\lambda\|u\|^{2\gamma}_{2\gamma}
-\delta_\varepsilon\lambda\|u\|^{q}_{q}-\frac{1}{2^*_s}\|u\|^{2^*_s}_{2^*_s}.$$
The fractional Sobolev inequality proved in \cite[Theorem 6.5]{valpal} assures the exi\-sten\-ce of positive constants
$S_{2\gamma}$, $S_q$ and $S_{2^*_s}$  for which
$$\mathcal J_\lambda(u)\geq \left(\frac{\mathscr M(1)}{2}-\varepsilon\lambda S_{2\gamma}\right)\left\|u\right\|^{2\gamma}-\delta_\varepsilon
\lambda S_q\left\|u\right\|^{q}-S_{2^*_s}\left\|u\right\|^{2^*_s}.$$
Therefore, fixing $\varepsilon>0$ so small that $\mathscr M(1)>2\varepsilon\lambda S_{2\gamma}$ and consequently cho\-osing $\rho$  small enough,
we obtain the result, thanks to the fact that $2\gamma<q<2^*_s$ by $(f_2)$. 
\end{proof}

\begin{lemma}\label{mp2} For any $\lambda>0$ there exists $e\in Z$, with $e\ge0$ a.e. in $\mathbb{R}^n$, \mbox{$\mathcal J_\lambda(e)<0$} and $\left\|e\right\|>\rho$,
where $\rho$ is given in Lemma~$\ref{mp1}$.
\end{lemma}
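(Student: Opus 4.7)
The plan is to produce $e$ as $e = t_0 u_0$ for a fixed non--negative test function $u_0 \in C_0^\infty(\Omega) \subset Z$ with $u_0 \not\equiv 0$, by letting $t_0 > 0$ be sufficiently large. First I would fix any $u_0 \in C_0^\infty(\Omega)$ with $u_0 \ge 0$ and $u_0 \not\equiv 0$, so that in particular $\|u_0\| > 0$ and $\|u_0\|_{2^*_s} > 0$. Since $u_0$ is strictly positive on a set of positive measure, and $c(x) = F(x,1) > 0$ in $\Omega$ by \eqref{f4}, one also gets $\int_\Omega c(x) u_0^\sigma\, dx > 0$.

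Next I would estimate $\mathcal J_\lambda(t u_0)$ for large $t$ by combining three ingredients already established. From \eqref{m4}, taking any $\varepsilon > 0$ (say $\varepsilon = 1$), one has $\mathscr M(t^2\|u_0\|^2) \le r_\varepsilon\, t^{2\gamma} \|u_0\|^{2\gamma}$ as soon as $t^2 \|u_0\|^2 \ge \varepsilon$. From \eqref{f31}, one has
\[
\lambda \int_\Omega F(x, t u_0)\,dx \;\ge\; \lambda t^\sigma \int_\Omega c(x) u_0(x)^\sigma\, dx \;-\; \lambda \int_\Omega C(x)\, dx,
\]
which is meaningful since $c, C \in L^\infty(\Omega)$. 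Putting these together yields
\[
\mathcal J_\lambda(t u_0) \;\le\; \frac{r_\varepsilon}{2} t^{2\gamma} \|u_0\|^{2\gamma} \;-\; \lambda t^\sigma \!\int_\Omega\! c(x) u_0^\sigma\, dx \;+\; \lambda \!\int_\Omega\! C(x)\, dx \;-\; \frac{t^{2^*_s}}{2^*_s} \|u_0\|_{2^*_s}^{2^*_s}
\]
for all sufficiently large $t$.

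Because $2\gamma < \sigma < 2^*_s$ by $(f_2)$ and $(f_3)$, the dominant term as $t \to \infty$ is $-t^{2^*_s}\|u_0\|_{2^*_s}^{2^*_s}/2^*_s$, so $\mathcal J_\lambda(t u_0) \to -\infty$. Hence there exists $t_0 > 0$ large enough so that $e := t_0 u_0$ satisfies simultaneously $\|e\| = t_0\|u_0\| > \rho$ and $\mathcal J_\lambda(e) < 0$, while $e \ge 0$ a.e.\ in $\mathbb{R}^n$ by construction.

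I do not expect any real obstacle here: the only subtle point is to make sure that the polynomial upper bound \eqref{m4} on $\mathscr M$ is indeed weaker than the critical term, and this is ensured by $2\gamma < 2^*_s$, which is built into $(M_1)$. The other ingredients, \eqref{f31} and the choice of $u_0$, are standard.
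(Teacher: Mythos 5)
Your proposal is correct and follows essentially the same route as the paper: fix a non--negative $u_0$, bound $\mathscr M$ polynomially from above (the paper uses the consequence \eqref{m5} of \eqref{m4}, you use \eqref{m4} directly), bound $F$ from below via \eqref{f31}, and let $t\to\infty$ using $2\gamma<\sigma<2^*_s$ so the critical term dominates. The positivity of $\int_\Omega c(x)u_0^\sigma\,dx$ that you verify is not actually needed, since the $-t^{2^*_s}\|u_0\|_{2^*_s}^{2^*_s}/2^*_s$ term alone drives $\mathcal J_\lambda(tu_0)$ to $-\infty$.
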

\begin{proof} Fix $\lambda>0$ and take $u_0\in Z$ such that $u_0\geq 0$ a.e. in $\mathbb{R}^n$ and $\left\|u_0\right\|=1$. By \eqref{m5} there exists
$t_0=t_0(2)>0$ such that $\mathscr M(t^2)\leq2t^{\sigma}$ for any $t\geq t_0$.
Therefore, by using \eqref{f31} we find
$$\mathcal J_\lambda(tu_0)\leq t^\sigma\left(1-\lambda\int_\Omega c(x)\left|u_0(x)\right|^\sigma dx\right)
-\frac{t^{2^*_s}}{2^*_s}\|u_0\|^{2^*_s}_{2^*_s}+\lambda\|C\|_1,$$ for any $t\geq t_0$.
Since $\sigma<2^*_s$, passing to the limit as $t\to \infty$, we obtain that $\mathcal J_\lambda(tu_0)\to -\infty$.
Hence the assertion follows by taking $e=t_{*}u_0$, with $t_* >0$ large enough. 
\end{proof}

We discuss now the compactness property for the fun\-ctio\-nal~$\mathcal J_\lambda$, given by the Palais--Smale condition at a suitable level.
For this, we fix $\lambda>0$ and set
\begin{equation}\label{mm}c_\lambda=\inf_{g\in\Gamma}\max_{t\in[0,1]}\mathcal J_\lambda(g(t)),\end{equation}
where
$$\Gamma=\left\{g\in C([0,1],Z):g(0)=0,\;\mathcal J_\lambda(g(1))<0\right\}.$$
Clearly, $c_\lambda>0$ by Lemma~\ref{mp1}.
We recall that $\left\{u_{j}\right\}_{j\in\mathbb{N}}\subset Z$ is a {\it Palais--Smale sequence for $\mathcal J_\lambda$ at level $c_\lambda\in\mathbb{R}$} if
\begin{equation}\label{ps1}
\mathcal J_\lambda(u_{j})\to c_\lambda\quad\mbox{and}\quad\mathcal J'_\lambda(u_{j})\to0\quad\mbox{as}\;j\to \infty.
\end{equation}
We say that  $J_\lambda$ {\it satisfies the Palais--Smale condition at level} $c_\lambda$ if any Palais--Smale sequence
$\left\{u_{j}\right\}_{j\in\mathbb{N}}$ at level $c_\lambda$ admits a convergent subsequence in $Z$.

Before proving the relatively compactness of the Palais--Smale sequences, we introduce an asymptotic condition for the level $c_\lambda$.
This result will be crucial not only to get \eqref{asym}, but above all to overcome the lack of compactness due to the presence of a critical nonlinearity.
\begin{lemma}\label{infinito} It results
$$\lim_{\lambda\to\infty}c_\lambda=0.$$
\end{lemma}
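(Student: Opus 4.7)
The idea is to bound $c_\lambda$ from above by evaluating $\mathcal J_\lambda$ along a single, $\lambda$--independent ray in $Z$. Fix once and for all a function $u_0\in Z$ with $u_0\ge 0$ a.e., $u_0\not\equiv 0$, and $\|u_0\|=1$, and set $h_\lambda(t)=\mathcal J_\lambda(tu_0)$ on $[0,\infty)$. Exactly the computation in the proof of Lemma~\ref{mp2} yields $h_\lambda(0)=0$ and $h_\lambda(t)\to-\infty$ as $t\to+\infty$, so $h_\lambda$ attains a global maximum at some $t_\lambda>0$. Picking $t_*>t_\lambda$ with $h_\lambda(t_*)<0$, the path $g(s)=s\,t_*u_0$ belongs to $\Gamma$, and therefore
$$c_\lambda\le \max_{t\ge 0}h_\lambda(t)=h_\lambda(t_\lambda).$$
Since $F\ge 0$ by \eqref{f4} and the critical term enters with a negative sign, one has at once $h_\lambda(t_\lambda)\le \tfrac12\mathscr M(t_\lambda^2)$, so by continuity of $\mathscr M$ with $\mathscr M(0)=0$ the whole lemma reduces to showing $t_\lambda\to 0$ as $\lambda\to\infty$.

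The first step is a uniform bound on $t_\lambda$. The identity $h_\lambda'(t_\lambda)=0$ reads
$$M(t_\lambda^2)\,t_\lambda=\lambda\int_\Omega f(x,t_\lambda u_0)\,u_0\,dx+t_\lambda^{2^*_s-1}\|u_0\|_{2^*_s}^{2^*_s},$$
and discarding the nonnegative $\lambda$--term produces $t_\lambda^{2^*_s-2}\|u_0\|_{2^*_s}^{2^*_s}\le M(t_\lambda^2)$. Combining $(M_1)$, in the form $M(t)\le\gamma\mathscr M(t)/t$, with \eqref{m4} applied at $\varepsilon=1$ yields $M(t_\lambda^2)\le \gamma r_1 t_\lambda^{2(\gamma-1)}$ as soon as $t_\lambda\ge 1$. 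Since $2^*_s-2\gamma>0$ by $(M_1)$, this gives $t_\lambda\le T$ for some constant $T$ independent of $\lambda$.

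Next I would prove $t_\lambda\to 0$ by contradiction. If not, along a subsequence $t_\lambda\to t_*>0$; by continuity of $M$ and the uniform bound $t_\lambda\le T$, the left--hand side of the Euler identity stays bounded, so
$$\int_\Omega f(x,t_\lambda u_0)\,u_0\,dx\le \frac{C}{\lambda}\longrightarrow 0.$$
On the other hand, the subcritical estimate \eqref{f11} supplies a $\lambda$--independent dominating function, and the Carath\'eodory property of $f$ combined with dominated convergence give
$$\int_\Omega f(x,t_\lambda u_0)\,u_0\,dx\longrightarrow \int_\Omega f(x,t_* u_0)\,u_0\,dx,$$
which is strictly positive because $u_0>0$ on a set of positive measure and $f(x,\cdot)>0$ on $\mathbb R^+$ by \eqref{f4}. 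This contradiction forces $t_\lambda\to 0$, and then $c_\lambda\le \tfrac12\mathscr M(t_\lambda^2)\to 0$ closes the argument.

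I expect the uniform bound on $t_\lambda$ to be the main obstacle: in the degenerate regime $M(0)=0$ there is no useful lower control on $M$ near the origin, so one cannot hope to extract such a bound from the $\lambda$--integral alone. It is precisely the polynomial \emph{upper} growth encoded in $(M_1)$, in tandem with the critical term appearing in the Euler equation for $h_\lambda$, that prevents $t_\lambda$ from escaping to infinity as $\lambda$ grows.
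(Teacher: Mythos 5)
Your proposal is correct and follows essentially the same path as the paper's proof: evaluate $\mathcal J_\lambda$ along a fixed ray, bound $c_\lambda\le\tfrac12\mathscr M(t_\lambda^2)$, show $\{t_\lambda\}$ is bounded by playing the growth restriction of $(M_1)$ against the critical term in the Euler identity $h_\lambda'(t_\lambda)=0$, and then force $t_\lambda\to0$ by the same dominated--convergence contradiction with the $\lambda$--weighted integral of $f$. The only (immaterial) variation is that you bound $t_\lambda$ directly via \eqref{m4} and the exponent gap $2^*_s>2\gamma$, whereas the paper routes the same estimate through \eqref{m5} and the gap $2^*_s>\sigma$.
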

\begin{proof} Fix $\lambda>0$ and let $e\in Z$ be the function obtained by Lemma \ref{mp2}.
Since $\mathcal J_\lambda$ satisfies the Mountain Pass geometry, there exists $t_\lambda>0$ verifying
$\mathcal J_\lambda(t_\lambda e)=\displaystyle\max_{t\geq 0}J_\lambda(te)$. Hence,
$\mathcal J'_\lambda(t_\lambda e)(e)=0$ and by \eqref{derivata}
\begin{equation}\label{3.1}
t_\lambda \left\|e\right\|^2 M(t^{2}_{\lambda}\left\|e\right\|^2)=
\lambda\int_\Omega f(x, t_\lambda e(x))e(x)dx+t^{2^*_s-1}_{\lambda}\|e\|^{2^*_s}_{2^*_s}\ge t^{2^*_s-1}_{\lambda}\|e\|^{2^*_s}_{2^*_s}
\end{equation}
by \eqref{f4} and the fact that $\lambda>0$. We claim that $\{t_\lambda\}_{\lambda>0}$ is bounded. Indeed, fix $\varepsilon>0$. By \eqref{m5} there exists $t_0=t_0(\varepsilon)>0$ such that
$\mathscr M(t)\le\varepsilon t^{\sigma/2}$ for any $t\ge t_0$. Thus, denoting by $\Lambda=\{\lambda>0:\; t^{2}_{\lambda}\left\|e\right\|^2\geq t_0\}$,
we see that
\begin{equation}\label{3.2}
t^2_\lambda \left\|e\right\|^2 M(t^{2}_{\lambda}\left\|e\right\|^2)\le
\gamma\mathscr M(t^{2}_{\lambda}\left\|e\right\|^2)\le\varepsilon  \gamma t^\sigma_\lambda \left\|e\right\|^\sigma \quad \mbox{for any }\lambda\in\Lambda
\end{equation}
by $(M_1)$. Since by construction $e\ge0$ a.e. in $\mathbb{R}^n$ and $\left\|e\right\|>\rho$, from \eqref{3.1} and \eqref{3.2} it follows
$$\varepsilon  \gamma\left\|e\right\|^\sigma\geq t^{2^*_s-\sigma}_{\lambda}\|e\|^{2^*_s}_{2^*_s} \quad \mbox{for any }\lambda\in\Lambda,$$
which implies the boundedness of $\{t_\lambda\}_{\lambda\in\Lambda}$.
Clearly by construction of $\Lambda$ also $\{t_\lambda\}_{\lambda\in(\mathbb R\setminus\Lambda)}$ is bounded.
This concludes the proof of the claim.

Fix now a sequence $\{\lambda_j\}_{j\in\mathbb{N}}\subset \mathbb R^+$ such that $\lambda_j\to\infty$ as $j\to\infty$. Clearly
$\{t_{\lambda_j}\}_{j\in\mathbb{N}}$ is bounded. Hence, there exist a subsequence of $\{\lambda_j\}_{j\in\mathbb{N}}$, that we still denote by $\{\lambda_j\}_{j\in\mathbb{N}}$, and a constant
$t_0\geq 0$ such that $t_{\lambda_j}\to t_0$ as $j\to\infty$.
By continuity of $M$, also $\left\{M(t^{2}_{\lambda_j}\left\|e\right\|^2)\right\}_{j\in\mathbb{N}}$ is bounded, and so by \eqref{3.1}
there exists $D>0$ such that for any $j\in\mathbb{N}$
\begin{equation}\label{D}
\lambda_j\int_\Omega f(x, t_{\lambda_j} e(x))e(x)dx+t^{2^*_s-1}_{\lambda_j}\|e\|^{2^*_s}_{2^*_s}\leq D.
\end{equation}
We assert that $t_0=0$. Indeed, if $t_0>0$ then by \eqref{f11} and the Dominated Convergence Theorem
$$\int_\Omega f(x, t_{\lambda_j} e(x))e(x)dx\to\int_\Omega f(x, t_0 e(x))e(x)dx>0\quad\mbox{as }j\to\infty$$
by \eqref{f4}. Recalling that $\lambda_j\to\infty$, we get
$$\lim_{j\to\infty}\left(\lambda_j\int_\Omega f(x, t_{\lambda_j} e(x))e(x)dx+t^{2^*_s-1}_{\lambda_j}\|e\|^{2^*_s}_{2^*_s}\right)=\infty,$$
which contradicts \eqref{D}. Thus $t_0=0$ and $t_\lambda\to0$ as $\lambda\to\infty$, since the sequence $\{\lambda_j\}_{j\in\mathbb{N}}$ is arbitrary.

Consider now the path $g(t)=te$,  $t\in[0,1]$, belonging to $\Gamma$. By Lemma~\ref{mp1} and \eqref{f4}
$$0<c_\lambda\leq\max_{t\in[0,1]}\mathcal J_\lambda(g(t))\leq\mathcal J_\lambda(t_\lambda e)
\leq\frac{1}{2}\mathscr M(t^{2}_{\lambda}\left\|e\right\|^2),$$
where  $\mathscr M(t^{2}_{\lambda}\left\|e\right\|^2)\to 0$ as $\lambda\to\infty$ by continuity. This completes the proof of the lemma.
\end{proof}

Now, we are ready to prove the Palais--Smale condition.

\begin{lemma}\label{palais} There exists $\lambda^*>0$ such that for any $\lambda\ge\lambda^*$ the functional $J_\lambda$ satisfies the Palais--Smale condition at level $c_\lambda$.
\end{lemma}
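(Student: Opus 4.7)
The plan is to take a Palais--Smale sequence $\{u_j\}\subset Z$ at level $c_\lambda$ and show it admits a strongly convergent subsequence. I would first obtain boundedness of $\{u_j\}$ from the standard combination
\[
\mathcal J_\lambda(u_j)-\tfrac{1}{\sigma}\mathcal J'_\lambda(u_j)(u_j)=c_\lambda+o(1)+o(\|u_j\|).
\]
Assumption $(M_1)$ forces $\tfrac{1}{2}\mathscr M(\|u_j\|^2)-\tfrac{1}{\sigma}M(\|u_j\|^2)\|u_j\|^2\ge(\tfrac12-\tfrac{\gamma}{\sigma})\mathscr M(\|u_j\|^2)\ge 0$ (since $\sigma>2\gamma$ by $(f_3)$), the nonlinear term $\lambda\int_\Omega[\sigma^{-1}f(x,u_j)u_j-F(x,u_j)]\,dx$ is non-negative by $(f_3)$ and \eqref{f4}, and the coefficient $\tfrac{1}{\sigma}-\tfrac{1}{2^*_s}$ multiplying $\|u_j\|^{2^*_s}_{2^*_s}$ is positive. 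Hence $c_\lambda+o(\|u_j\|)\gtrsim\mathscr M(\|u_j\|^2)$, and since $(M_2)$ forces $\mathscr M(t)\ge\kappa(1)(t-1)+\mathscr M(1)$ for $t\ge 1$, the sequence must stay bounded in $Z$.

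Passing to a subsequence, $u_j\rightharpoonup u$ in $Z$, $u_j\to u$ a.e.\ in $\Omega$ and in $L^r(\Omega)$ for all $r\in[1,2^*_s)$, and $\|u_j\|^2\to t_0\ge\|u\|^2$. Continuity of $M$ yields $M(\|u_j\|^2)\to M(t_0)$, while passing to the limit in $\mathcal J'_\lambda(u_j)(\varphi)=o(1)$ for each fixed $\varphi\in Z$ (the subcritical term is handled via \eqref{f11} and strong $L^q$--convergence, and the critical one via weak convergence of $|u_j|^{2^*_s-2}u_j$ to $|u|^{2^*_s-2}u$ in $L^{(2^*_s)'}(\Omega)$) shows that $u$ is a weak solution of the \emph{limit} equation
\[
M(t_0)\langle u,\varphi\rangle=\lambda\int_\Omega f(x,u)\varphi\,dx+\int_\Omega |u|^{2^*_s-2}u\varphi\,dx,\qquad\varphi\in Z.
\]
Setting $v_j=u_j-u$, weak convergence in the Hilbert space $Z$ gives $\|u_j\|^2=\|u\|^2+\|v_j\|^2+o(1)$, and the Br\'ezis--Lieb lemma in $L^{2^*_s}$ gives $\|u_j\|^{2^*_s}_{2^*_s}=\|u\|^{2^*_s}_{2^*_s}+\|v_j\|^{2^*_s}_{2^*_s}+o(1)$. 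Writing $\beta=\lim\|v_j\|^2=t_0-\|u\|^2$ and $\ell=\lim\|v_j\|^{2^*_s}_{2^*_s}$, and subtracting the above displayed equation tested on $\varphi=u$ from the limit of $\mathcal J'_\lambda(u_j)(u_j)=o(1)$, I obtain the crucial coupling $M(t_0)\beta=\ell$.

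The delicate step is ruling out $\beta>0$. The fractional Sobolev embedding $Z\hookrightarrow L^{2^*_s}(\Omega)$ yields $\ell\le S_\star\,\beta^{2^*_s/2}$ for a constant $S_\star>0$; together with $M(t_0)\beta=\ell$, if $\beta>0$ then $\beta\ge(M(t_0)/S_\star)^{2/(2^*_s-2)}$. Using $\beta\le t_0$ and, for $t_0\in(0,1]$, the lower bound $M(t_0)\ge at_0$ from $(M_3)$, this inequality rearranges to $t_0^{(4s-n)/(2s)}\ge\text{const}>0$; under the hypothesis $n<4s$ of Theorem~\ref{Th1} the exponent is strictly positive and hence $t_0\ge\tau_0>0$ with $\tau_0$ depending only on the data. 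For $t_0\ge 1$, $(M_2)$ directly gives $M(t_0)\ge\kappa(1)>0$. In either case $M(t_0)\ge c_1>0$, so $\beta\ge c_2>0$ and $\ell\ge c_3>0$ uniformly.

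To conclude, writing once more $c_\lambda=\lim[\mathcal J_\lambda(u_j)-\sigma^{-1}\mathcal J'_\lambda(u_j)(u_j)]$ and noting that every summand on the right is non-negative by $(M_1)$, $(f_3)$ and $\sigma<2^*_s$, I keep only the one containing $\ell$ to get $c_\lambda\ge(\sigma^{-1}-(2^*_s)^{-1})\,c_3=:C^*>0$, a threshold independent of $\lambda$. By Lemma~\ref{infinito} there exists $\lambda^*>0$ such that $c_\lambda<C^*$ for every $\lambda\ge\lambda^*$, contradicting $\beta>0$. Therefore $\beta=0$, $v_j\to 0$ in $Z$ and $u_j\to u$ strongly, proving the Palais--Smale condition at level $c_\lambda$. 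The main obstacle is precisely the bootstrap forcing $t_0$ away from zero: without the quantitative degeneracy bound $(M_3)$ and the dimensional restriction $n<4s$, the coupling $M(t_0)\beta=\ell$ together with the Sobolev inequality would permit $t_0\to 0$, and no uniform positive lower bound on $\ell$ could be combined with the smallness of $c_\lambda$ to produce a contradiction.
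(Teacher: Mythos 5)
Your argument is correct, and it rests on the same pillars as the paper's proof: the combination $\mathcal J_\lambda(u_j)-\sigma^{-1}\mathcal J'_\lambda(u_j)(u_j)$ for boundedness, the Br\'ezis--Lieb decomposition leading to the coupling $M(t_0)\beta=\ell$ (formula \eqref{I} in the paper), the fractional Sobolev constant, and the interplay of $(M_3)$ with $n<4s$ and Lemma~\ref{infinito} to kill the critical defect. Where you genuinely diverge is in the organization, and your choices simplify two points. First, the paper must split into the cases $\inf_j\|u_j\|>0$ and $\inf_j\|u_j\|=0$, because both its boundedness estimate (which needs $M(\|u_j\|^2)\ge\kappa_\lambda$ from $(M_2)$ applied with $\tau=d_\lambda^2$) and its final passage from $\lim_j\|u_j-u_\lambda\|_{2^*_s}=0$ to strong convergence (which divides by $M(\alpha_\lambda^2)$ in \eqref{I}) require the norms to stay away from zero; you avoid the split entirely by bounding $\|u_j\|$ through the superlinear growth $\mathscr M(t)\ge\mathscr M(1)+\kappa(1)(t-1)$ for $t\ge1$, and by reading strong convergence directly off $\beta=\lim_j\|u_j-u\|^2=0$, with no division by $M(t_0)$. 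Second, the paper rules out $\ell_\lambda>0$ by an asymptotic argument along a sequence $\lambda_k\uparrow\infty$, using that $\alpha_{\lambda_k}\to0$ and deriving $\alpha_k^{2(4s/n-1)}\ge c\,S$; you instead extract from $\beta>0$ a single $\lambda$--independent lower bound $c_\lambda\ge C^*>0$ and invoke Lemma~\ref{infinito} once. The two contradictions are equivalent in substance --- both exploit $(M_3)$ and $n<4s$ in exactly the same way, namely to prevent $t_0$ (the paper's $\alpha_\lambda^2$) from degenerating to zero faster than the Sobolev obstruction --- but your version yields an explicit threshold $C^*$ and a cleaner quantifier structure for $\lambda^*$.
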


\begin{proof} Take $\lambda>0$ and let $\left\{u_{j}\right\}_{j\in\mathbb{N}}\subset Z$ be a Palais--Smale sequence for $\mathcal J_\lambda$
at level $c_\lambda$. Due to the degenerate nature of \eqref{P}, two situations must be considered:
either \mbox{$\displaystyle\inf_{j\in\mathbb{N}}\left\|u_j\right\|=d_\lambda>0$} or $\displaystyle\inf_{j\in\mathbb{N}}\left\|u_j\right\|=0$.
For this, we divide the proof in two cases. \medskip

\noindent {\it Case $\displaystyle\inf_{j\in\mathbb{N}}\left\|u_j\right\|=d_\lambda>0$.}
First we prove that $\left\{u_{j}\right\}_{j\in\mathbb{N}}$ is bounded in $Z$.
By $(M_2)$ with $\tau=d_\lambda^2$ there exists $\kappa_\lambda>0$ such that
\begin{equation}\label{mk}
M(\left\|u_j\right\|^2)\geq \kappa_\lambda\quad\mbox{for any }j\in\mathbb{N}.
\end{equation}
Furthermore, from $(M_1)$ and $(f_3)$ it follows that
\begin{equation}\label{4.2}
\begin{aligned}
\mathcal J_\lambda(u_j)&-\frac1\sigma\mathcal J'_\lambda(u_j)(u_j)\\
&\ge\frac{1}{2}\mathscr M(\left\|u_j\right\|^2)-\frac{1}{\sigma}M(\left\|u_j\right\|^2)\left\|u_j\right\|^2+\left(\frac{1}{\sigma}
-\frac{1}{2^*_s}\right)\|u_j\|^{2^*_s}_{2^*_s}\\
&\ge\left(\frac{1}{2\gamma}-\frac{1}{\sigma}\right)M(\left\|u_j\right\|^2)\left\|u_j\right\|^2+\left(\frac{1}{\sigma}
-\frac{1}{2^*_s}\right)\|u_j\|^{2^*_s}_{2^*_s},
\end{aligned}
\end{equation}
with $2\gamma<\sigma<2^*_s$. Hence, \eqref{ps1} and \eqref{mk}--\eqref{4.2} yield at once that as $j\to\infty$
\begin{equation}\label{4bis}
\begin{gathered}
c_\lambda+o(1)\ge\left(\frac{1}{2\gamma}-\frac{1}{\sigma}\right)M(\left\|u_j\right\|^2)\left\|u_j\right\|^2
\ge \sigma_\lambda \left\|u_j\right\|^2,\\
\sigma_\lambda=\left(\dfrac{1}{2\gamma}-\frac{1}{\sigma}\right)\kappa_\lambda>0.
\end{gathered}
\end{equation}
Therefore, $\left\{u_{j}\right\}_{j\in\mathbb{N}}$ is bounded in $Z$.

Now we can prove the validity of the {\it Palais--Smale} condition.
By using the boundedness of $\left\{u_{j}\right\}_{j\in\mathbb{N}}$ in $Z$
and by applying \cite[Lemma~4]{fiscella} and \cite[Theorem~4.9]{brezis}, there exists $u_\lambda\in Z$ such that,
up to a subsequence, still relabeled $\left\{u_{j}\right\}_{j\in\mathbb{N}}$, it follows that
\begin{equation}\label{4.4}
\begin{array}{ll}
u_{j}\rightharpoonup u_\lambda\text{ in $Z$ and in }L^{2^*_s}(\Omega), & \left\|u_j\right\|\to\alpha_\lambda, \\
u_{j}\to u_\lambda\text{ in }L^{q}(\Omega)\mbox{ and in } L^{2\gamma}(\Omega),\quad &\left\|u_j-u_\lambda\right\|_{2^*_s}\to\ell_\lambda,\\
u_{j}\to u_\lambda\text{ a.e.  in }\Omega, &|u_{j}|\le h \text{ a.e.  in }\Omega,
\end{array}
\end{equation}
with $h\in L^{2\gamma}(\Omega)\cap L^{q}(\Omega)$. Clearly $\alpha_\lambda>0$ since we are in the case in which $d_\lambda>0$.
Therefore, $M(\left\|u_j\right\|^2)\to M(\alpha_\lambda^2)>0$ as $j\to\infty$, by continuity and the fact that $0$ is
the unique zero of $M$ by $(M_2)$.

We first assert that
\begin{equation}\label{ass}
\lim_{\lambda\to\infty}\alpha_\lambda=0.
\end{equation}
Otherwise $\limsup_{\lambda\to\infty}\alpha_\lambda=\alpha>0$. Hence there is a sequence, say $k\to\lambda_k\uparrow\infty$ such that
$\alpha_{\lambda_k}\to\alpha$ as $k\to\infty$, and letting $k\to\infty$ we get from \eqref{4.2} and Lemma~\ref{infinito} that
$$0\ge\left(\frac{1}{2\gamma}-\frac{1}{\sigma}\right)M(\alpha^2)\alpha^2>0$$
by $(M_2)$, which is the desired contradiction and proves the assertion \eqref{ass}.
Moreover, $\|u_\lambda\|\le\lim_j\|u_j\|=\alpha_\lambda$ since $u_j\rightharpoonup u_\lambda$ in $Z$, so that \eqref{ass}
implies at once by the fractional Sobolev inequality
\begin{equation}\label{assl}
\lim_{\lambda\to\infty}\|u_\lambda\|_{2^*_s}=\lim_{\lambda\to\infty}\|u_\lambda\|=0.
\end{equation}
By  \eqref{f11}, \eqref{4.4} and the fact that $\left|u_j\right|^{2^*_s-2}u_j\rightharpoonup \left|u_\lambda\right|^{2^*_s-2}u_\lambda$
in $L^{2^{*'}_s}(\Omega)$, where $2^{*'}_s=2n/(n+2s)$ is the H\"older conjugate of $2^*_s$, we have
$$M(\alpha_\lambda^2)\left\langle u_\lambda,\varphi\right\rangle=\lambda\int_\Omega f(x, u_\lambda(x))\varphi(x)dx
+\int_\Omega \left|u_\lambda(x)\right|^{2^*_s-2}u_\lambda(x)\varphi(x)dx$$
for any $\varphi\in Z$.  Hence, $u_\lambda$ is a critical point of the $C^1(Z)$ functional
\begin{equation}\label{J}\mathcal J_{\alpha_\lambda}(u)=\frac12M(\alpha^2_\lambda)\|u\|^2-\lambda\int_\Omega F(x, u(x))dx-\frac1{2^*_s}\|u\|^{2^*_s}_{2^*_s}.\end{equation}
In particular, \eqref{ps1} and \eqref{4.4} imply that as $j\to\infty$
\begin{align}\label{noi}
o(1)&=\langle \mathcal J'_{\lambda}(u_j)- \mathcal J'_{\alpha_\lambda}(u_\lambda), u_j-u_\lambda\rangle= M(\|u_j\|^2)\|u_j\|^2 + M(\alpha_\lambda^2)\|u_\lambda\|^2\nonumber\\
&-  \langle u_j,u_\lambda\rangle\left[M(\|u_j\|^2)+M(\alpha_\lambda^2)\right]-\lambda\!\int_{\Omega}\!\!\big[f(x,u_j)-f(x,u_\lambda)\big](u_j-u_\lambda)\,dx\nonumber\\
 &-\int_{\Omega}\big(|u_j|^{2^*_s-2}u_j-|u_\lambda|^{2^*_s-2}u_\lambda\big)(u_j-u_\lambda)dx\\
&=M(\alpha^2_\lambda)\big(\alpha_\lambda^2-\|u_\lambda\|^2\big)-\|u_j\|^{2^*_s}_{2^*_s}+\|u_\lambda\|^{2^*_s}_{2^*_s}+o(1)\nonumber\\
&=M(\alpha^2_\lambda)\|u_j-u_\lambda\|^2-\|u_j-u_\lambda\|^{2^*_s}_{2^*_s}+o(1).\nonumber
\end{align}
Indeed, by \eqref{f11} and \eqref{4.4},
$$\lim_{j\to\infty}\int_{\Omega}\big[f(x,u_j)-f(x,u_\lambda)\big](u_j-u_\lambda)\,dx=0.$$
Moreover, again by \eqref{4.4} and the celebrated Brezis \& Lieb lemma, see \cite{brezislieb}, as $j\to\infty$
$$\|u_j\|^2=\|u_j-u_\lambda\|^2+\|u_\lambda\|^2+o(1),\qquad
\|u_j\|^{2^*_s}_{2^*_s}=\|u_j-u_\lambda\|^{2^*_s}_{2^*_s}+\|u_\lambda\|^{2^*_s}_{2^*_s}+o(1).
$$
Finally, we have used the fact that $\|u_j\|\to\alpha_\lambda$. Therefore, we have proved the main formula
\begin{equation}\label{I}
M(\alpha^2_\lambda)\lim_{j\to\infty}\|u_j-u_\lambda\|^2=\lim_{j\to\infty}\|u_j-u_\lambda\|^{2^*_s}_{2^*_s}.
\end{equation}
Using \eqref{4.2}, \eqref{4.4} and the Brezis \& Lieb lemma, we attain as $j\to\infty$
$$
\begin{aligned}
c_\lambda+o(1)&=\mathcal J_\lambda(u_j)-\frac1{\sigma}\mathcal J'_\lambda(u_j)(u_j)\ge\left(\frac{1}{\sigma}-\frac{1}{2^*_s}\right)\|u_j\|^{2^*_s}_{2^*_s}\\
&=\left(\frac{1}{\sigma}-\frac{1}{2^*_s}\right)\big\{\ell_\lambda^{2^*_s}+\|u_\lambda\|^{2^*_s}_{2^*_s}\big\}+o(1).
\end{aligned}$$
Thus, by Lemma~\ref{infinito} and \eqref{assl} also
\begin{equation}\label{liml}
\lim_{\lambda\to\infty}\ell_\lambda=0.
\end{equation}
Denote by $S$ the main fractional Sobolev constant, that is
\begin{equation}\label{S}S=\inf_{\substack{v\in H^s(\mathbb R^n)\\v\not=0}}\frac{\|v\|^2}{\|v\|^2_{2^*_s}}.\end{equation}
By \eqref{I} and the notation in \eqref{4.4},  for all $\lambda\in\mathbb R^+$
\begin{equation}\label{ll}\ell_\lambda^{2^*_s}\ge S\, M(\alpha^2_\lambda)\ell_\lambda^2.\end{equation}
We assert that $\ell_\lambda=0$ for all $\lambda\ge\lambda^*$. Otherwise there exists a sequence $k\mapsto\lambda_k\uparrow\infty$ such that
$\ell_{\lambda_k}=\ell_k>0$. Noting that \eqref{noi} implies in particular that
$$M(\alpha^2_\lambda)\big(\alpha_\lambda^2-\|u_\lambda\|^2\big)=\ell_\lambda^{2^*_s},$$
we get along this sequence, using \eqref{ll}  and denoting $\alpha_{\lambda_k}=\alpha_k$, $u_{\lambda_k}=u_k$, that
$$\big(\ell_k^{2^*_s}\big)^{2s/n}=M(\alpha^2_k)^{2s/n}\big(\alpha_k^2-\|u_k\|^2\big)^{2s/n}\ge S\, M(\alpha^2_k).$$
Hence,  we obtain for all $k$ sufficiently large by $(M_3)$ and \eqref{ass}
$$\alpha_k^{4s/n}\ge\big(\alpha_k^2-\|u_k\|^2\big)^{2s/n}\ge S\, M(\alpha^2_k)^{1-2s/n}\ge c\,S\,\alpha_k^{2(1-2s/n)},$$
where $c=a^{1-2s/n}$. Therefore, being $\alpha_k>0$ for all $k$, it follows that for all $k$ sufficiently large
$$\alpha_k^{2(4s/n-1)}\ge  c\,S.$$
This is impossible by \eqref{ass}, since $4s>n$ by assumption.
In conclusion the assertion is proved.

Hence, for all $\lambda\ge\lambda^*$
$$\lim_{j\to\infty}\|u_j-u_\lambda\|^{2^*_s}_{2^*_s}=0.$$
Thus,  $u_j\to u_\lambda$ in $Z$ as $j\to\infty$ for all $\lambda\ge\lambda^*$ by \eqref{I}, being $M(\alpha^2_{\lambda})>0$ by $(M_2)$ and the fact that $d_\lambda>0$.
This completes the proof of the first case.\medskip

\noindent {\it Case $\displaystyle\inf_{j\in\mathbb{N}}\left\|u_j\right\|=0$.} Here, either 0 is an accumulation point for the real sequence $\left\{\left\|u_{j}\right\|\right\}_{j\in\mathbb{N}}$
and so there is a subsequence of $\left\{u_{j}\right\}_{j\in\mathbb{N}}$ strongly converging to $u=0$,
or 0 is an isolated point of $\left\{\left\|u_{j}\right\|\right\}_{j\in\mathbb{N}}$, and so there is a subsequence,
denoted by $\left\{\left\|u_{j_k}\right\|\right\}_{k\in\mathbb{N}}$, such that $\displaystyle\inf_{k\in\mathbb{N}}\left\|u_{j_k}\right\|=d_\lambda>0$.
In the first case we are done, while in the latter we can proceed as before. This completes the proof of the second case and of the lemma.
\end{proof}

\begin{proof}[\bf Proof of Theorem \ref{Th1}] Lemmas \ref{mp1}, \ref{mp2} and~\ref{palais} guarantee that for any $\lambda\ge\lambda^*$ the functional $\mathcal J_\lambda$ satisfies
all the assumptions of the Mountain Pass theorem. Hence, for any $\lambda\ge\lambda^*$ there exists a critical point $u_\lambda\in Z$ for
$\mathcal J_\lambda$ at level $c_\lambda$. Since $J_\lambda(u_\lambda)=c_\lambda>0=J_\lambda(0)$ we have that $u_\lambda\not\equiv 0$.
Finally, the asymptotic behavior \eqref{asym} holds thanks to \eqref{assl}, that is \eqref{asym} is a consequence of~\eqref{4bis} and Lemma~\ref{infinito}.
\end{proof}

\section{The non--degenerate case}\label{sec non-degenerate}

In this section we study problem \eqref{P} when the Kirchhoff function $M$ has a {\it non--degenerate} nature and satisfies \eqref{m'2}.
The first objective is to extend the result given in \cite[Theorem~1]{FV}, by trying to adapt the approach used in Theorem~\ref{Th1}.
As in~\cite{FV}, in the {\it non--degenerate} case we can still cover the model $M$ given in \eqref{prot} with a general exponent $\gamma\geq1$.
However, as pointed out in the Introduction, because of the presence of a critical term in \eqref{P}, the technique used in Theorem~\ref{Th1}
does not work for any $\gamma$ in Theorem~\ref{Th2}. This fact forces in Theorem~\ref{Th2}--$(i)$ the request $1\le\gamma<\sigma/2$,  and in Theorem~\ref{Th2}--$(ii)$ 
the use of a truncation argument seems to be useful as in~\cite[Theorem~1]{FV}.\medskip

\begin{proof}[\bf Proof of Theorem \ref{Th2}]
First let us say that by $(F_1)$ and $(F_2)$ for any $\varepsilon>0$ there exists
$\delta_\varepsilon>0$ such that $f(x,t)\leq 2\varepsilon t +q\delta_\varepsilon t^{q-1}$ for all $(x,t)\in\Omega\times\mathbb R^+_0$.
Clearly in both cases of the theorem, Lemma~\ref{mp1} continues to hold, replacing \eqref{f11} by the above relation and~\eqref{m3} by~\eqref{m30}.
We now divide the proof.\smallskip

\noindent
{\it Case $(i)$}.  
Lemmas~\ref{mp2} and~\ref{infinito} can be proved in an unchanged way as in Section~\ref{sec degenerate}. 
Hence all Lemmas~\ref{mp1}--\ref{infinito} are available and it remains to prove the main Lemma~\ref{palais}.

 The proof of Lemma~\ref{palais} simplifies, but we repeat the main argument where necessary. Fix $\lambda>0$ and let $\left\{u_{j}\right\}_{j\in\mathbb{N}}\subset Z$ be a Palais--Smale sequence for $\mathcal J_\lambda$ at level $c_\lambda$. We can proceed exactly as in the proof of Lemma~\ref{palais}, with $(M_1)$ replaced by the inequality in $(i)$. Hence, \eqref{m'2}, $(F_3)$ and \eqref{ps1} yield now that as $j\to\infty$
\begin{equation}\label{ut}\begin{aligned}c_\lambda+o(1)&\ge\left(\frac{1}{2\gamma}-\frac{1}{\sigma}\right)M(\left\|u_j\right\|^2)\left\|u_j\right\|^2+\left(\frac{1}{\sigma}
-\frac{1}{2^*_s}\right)\|u_j\|^{2^*_s}_{2^*_s}\\
&\ge\left(\frac{1}{2\gamma}-\frac{1}{\sigma}\right) a \left\|u_j\right\|^2,\quad\mbox{with }\left(\frac{1}{2\gamma}-\frac{1}{\sigma}\right) a>0\end{aligned}\end{equation}
by assumptions $(i)$ and \eqref{m'2}. Therefore, $\left\{u_{j}\right\}_{j\in\mathbb{N}}$ is bounded in $Z$ and so again, up to a subsequence,
still relabeled $\left\{u_{j}\right\}_{j\in\mathbb{N}}$,
we continue to have \eqref{4.4}, where now $\alpha_\lambda\ge0$, but again $M(\left\|u_j\right\|^2)\to M(\alpha_\lambda^2)\ge a>0$ as $j\to\infty$ by~\eqref{m'2}.
Therefore,  the weak limit $u_\lambda$ is a critical point of the $C^1(Z)$ functional $\mathcal J_{\alpha_\lambda}$ defined in \eqref{J}.
Clearly, \eqref{ass} and \eqref{assl} are still valid, so that the main formulas \eqref{noi}--\eqref{liml} can be derived exactly in the same way.
Hence, thanks to \eqref{m'2} and \eqref{S} the inequality \eqref{ll} reduces to
$$\ell_\lambda^{2^*_s}\ge a\,S\, \ell_\lambda^2,$$
which, together with \eqref{liml}, yields at once that there exists $\lambda^*>0$ such that $\ell_\lambda=0$ for all $\lambda\ge\lambda^*$.

In conclusion, the Palais--Smale condition holds and we are done as in the proof of Theorem~\ref{Th1}. 

Finally,  \eqref{asym} is just a consequence of~\eqref{assl}, which is implied by~\eqref{ut}
and  Lemma~\ref{infinito}.\smallskip

\noindent{\it Case $(ii)$}. As in \cite{FV}, before solving \eqref{P} we need a truncation argument.
Take $\alpha\in\mathbb{R}$, with $0<a\le M(0)<\alpha<\sigma a/2$, which is possible being $2M(0)<\sigma a$ by assumption. Put for all $t\in\mathbb R^+_0$
$$\begin{gathered}M_\alpha(t)=\begin{cases}
M(t), & \mbox{if } M(t)\leq \alpha,\\
\alpha, & \mbox{if } M(t)>\alpha,
\end{cases}\quad\mbox{so that}\\
M_\alpha(0)=M(0),\qquad\min_{t\in\mathbb R^+_0}M_\alpha(t)=a,\end{gathered}$$
and  denote by $\mathscr{M}_\alpha$ its primitive. Let us consider the auxiliary problem
\begin{equation}\label{Pa}
\begin{cases}
-M_\alpha(\left\|u\right\|^2)\mathcal L_Ku=\lambda f(x,u)+\left|u\right|^{2^* -2}u, & \mbox{in } \Omega,\\
u=0, & \mbox{in } \mathbb{R}^{n}\setminus\Omega.\end{cases}
\end{equation}
We are going to solve \eqref{Pa}, using a mountain pass argument as done in 
{\em Case~$(i)$}, but replacing the  Kirchhoff function $M$ with $M_\alpha$.

Clearly \eqref{Pa} can be thought as the Euler--Lagrange equation of the $C^1$ functional
$$\mathcal J_{\alpha,\lambda}(u)=\frac{1}{2}\mathscr{M}_\alpha(\left\|u\right\|^2)-\lambda\int_\Omega F(x, u(x))dx
-\frac{1}{2^*_s}\|u\|^{2^*_s}_{2^*_s},\quad u\in Z,$$
as introduced in \cite{FV}. First let us observe that for the functional $\mathcal J_{\alpha,\lambda}$ all the Lemmas~\ref{mp1}--\ref{infinito}
continue to hold. Indeed, for Lemma~\ref{mp2} it is enough only to observe that \eqref{m4} is now replaced by $\mathscr M(t)\le \alpha t$ for all $t\in\mathbb R^+_0$. Similarly, also Lemma~\ref{infinito} can be proved in a simpler way, by observing that now, being $t_\lambda >0$
for all $\lambda>0$, then \eqref{3.1} becomes
$$\alpha\, t^2_\lambda \|e\|^2\ge t^2_\lambda \|e\|^2 M_\alpha(t^{2}_{\lambda}\|e\|^2)\ge  t^{2^*_s}_{\lambda}\|e\|^{2^*_s}_{2^*_s} \quad \mbox{for any }\lambda\in\mathbb R^+.
$$
This implies at once that $\{t_\lambda\}_{\lambda\in\mathbb R^+}$ is bounded in $\mathbb R$. The rest of the proof is unchanged.
Hence  all Lemmas~\ref{mp1}--\ref{infinito} are valid for $\mathcal J_{\alpha,\lambda}$ and it remains to prove for $\mathcal J_{\alpha,\lambda}$ the main Lemma~\ref{palais}.

Proceeding as in {\em Case} $(i)$, by \eqref{m'2} and~$(F_3)$ now~\eqref{ut} becomes
\begin{equation}\label{utbis}
\begin{gathered}
c_\lambda+o(1)\ge\left(\frac{a}{2}-\frac{\alpha}{\sigma}\right)\|u_j\|^2+\left(\frac{1}{\sigma}
-\frac{1}{2^*_s}\right)\|u_j\|^{2^*_s}_{2^*_s},\\
\mbox{with }\frac{a}{2}-\frac{\alpha}{\sigma}>0,
\end{gathered}
\end{equation}
being $\alpha<\sigma a/2$. While the other key formulas hold true with no relevant modifications.
Thus, arguing as in the proof of Theorem~\ref{Th1}, we can find that for all $\alpha\in(M(0),\sigma a/2)$ there exists a suitable $\lambda_0=\lambda_0(\alpha)>0$ such that problem \eqref{Pa} admits a non--trivial weak solution $u_\lambda\in Z$, with $\mathcal J_{\alpha,\lambda}(u_\lambda)=c_\lambda$. Hence, \eqref{utbis} implies  that for all $\lambda\geq\lambda_0$
$$c_\lambda\ge\left(\frac{a}{2}-\frac{\alpha}{\sigma}\right)\|u_\lambda\|^2,\quad\mbox{with }\frac{a}{2}-\frac{\alpha}{\sigma}>0,$$
so that \eqref{asym} follows at once by Lemma~\ref{infinito}.

Fix $\alpha\in(M(0),\sigma a/2)$. By \eqref{asym}
$$a\le M(0)=M_\alpha(0)=\lim_{\substack{\lambda\to\infty\\ \lambda\ge\lambda_0}}M_\alpha(\left\|u_{\lambda}\right\|^2).$$
Therefore, there exists $\lambda^*=\lambda^*(\alpha)\ge\lambda_0$ such that
$$a\le M_\alpha(\left\|u_{\lambda}\right\|^2)<\alpha\quad\mbox{for all }\lambda\geq\lambda^*.$$
In conclusion, for all $\alpha\in(M(0),\sigma a/2)$ there exists a threshold $\lambda^*=\lambda^*(\alpha)>0$ such that for all $\lambda\geq\lambda^*$ the mountain pass solution $u_\lambda$ of \eqref{Pa} is also a solution of problem~\eqref{P}.
\end{proof}

Note that with Theorem~\ref{Th2}--$(ii)$ we are able to cover the case $\gamma\geq\sigma/2$,
not allowed in Theorem~\ref{Th2}--$(i)$. However, as mentioned in the
Introduction, the approach used in the case $(ii)$ is different. Indeed, in Theorem~\ref{Th2}--$(ii)$ the Kirchhoff function $M$ could increase  faster than the other terms
of $\mathcal J_\lambda$. This makes the argument performed in Theorem~\ref{Th1} no longer applicable, and a truncation technique
is used, as in~\cite{Fig,FV}, but without any monotonicity assumption on $M$ and without assuming that $M(0)=a$.\medskip

\noindent {\bf Acknowledgement.} The authors are members of the {\em Gruppo Nazionale per l'Analisi Matematica, la Probabilit\`a e
le loro Applicazioni} (GNAMPA) of the {\em Istituto Nazionale di Alta Matematica ``G. Severi"} (INdAM).
The third author was partially supported by the MIUR Project (201274FYK7)
{\em Aspetti variazionali e perturbativi nei problemi differenziali nonlineari}.

\end{document}